\numberwithin{equation}{section}
\newcommand\asertion[1]{ssertion $({\mathrm{\romannumeral #1\relax}})$}%假设1
\newcommand{\asselm}{associative element}%定义结合元
\newcommand{\almost}{para-}%定义可裂
\newcommand{\Hom}{\text{Hom}}
\newcommand{\bfs}{Without loss of generality we can assume}%定义可裂
\newcommand{\spc}{\mathbb{C}}
\newcommand{\spr}{\mathbb{R}}
\newcommand{\spo}{\mathbb{O}}%八元数
\newcommand\huaa[1]{\mathscr{A}(#1)}%结合元集
\newcommand\hua[3]{\mathscr{#1}^{#2}(#3)}%循环元集
\newcommand\pureim[1]{\mathit{Im}(#1)}%纯虚元集
\newcommand{\re}{\text{Re}\,}%
\newcommand\fx[2]{\left<#1,#2\right>}%
\newcommand\fsh[1]{\left|\left|#1\right|\right|}%
\def\O{\mathbb{O}}
\def\R{\mathbb{R}}
\def\abs#1{\left|#1\right|}
\newcommand\clifd[1]{C\ell_{#1}}%结合元集
\newtheorem{mydef}{Definition}[section]
\newtheorem{rem}[mydef]{Remark}
\newtheorem{eg}[mydef]{Example}
\newtheorem{cor}[mydef]{Corollary}
\newtheorem{prop}[mydef]{Proposition}
\newtheorem{lemma}[mydef]{Lemma}
\newtheorem{thm}[mydef]{Theorem}
\newtheorem{step }[stp]{Step }
\begin{document}

\title{Para-linearity as the nonassociative counterpart of linearity}
\author{Qinghai Huo}
\email[Q.~Huo]{hqh86@mail.ustc.edu.cn}
\address{Department of Mathematics, University of Science and Technology of China, Hefei 230026, China}

\author{Guangbin Ren}
\email[G.~Ren]{rengb@ustc.edu.cn}
\address{Department of Mathematics, University of Science and Technology of China, Hefei 230026, China}
 
 \date{\today}
 \keywords{Octonionic Hilbert space;
	para-linear;   Riesz representation theorem}

\subjclass[2010]{Primary: 17A35;46S10}

\thanks{This work was supported by the NNSF of China (11771412).}

	\begin{abstract}
In  an octonionic Hilbert space  $H$, the octonionic linearity is taken to fail for the maps induced by the octonionic inner products, and it should be  replaced with the octonionic para-linearity.
However, to introduce the notion of the octonionic para-linearity we encounter  an insurmountable obstacle. That is,
  the axiom
$$\left\langle pu ,u\right\rangle=p\left\langle u ,u\right\rangle$$ for any octonion $p$ and element $u\in H$
introduced by Goldstine and Horwitz    in 1964
 can not be interpreted as  a property to be obeyed  by the   octonionic para-linear maps.
In this article,   we solve this  critical  problem by showing  that this axiom is in fact non-independent from others.
This enables us to initiate  the study  of  octonionic para-linear  maps.
We can thus establish 	
the octonionic Riesz representation theorem  which, up to isomorphism,  identifies    two  octonionic Hilbert spaces with  one being  the dual of the other.
 The dual space consists of continuous left \almost linear functionals and it becomes a right $\O$-module under the  multiplication defined in terms of
 the second associators which measures the failure of $\O$-linearity.  This right multiplication has an alternative expression $${(f\odot p)(x)}=pf(p^{-1}x)p,$$
  which is
  a generalized   Moufang identity.
Remarkably, the multiplication  is   compatible with the  canonical norm, i.e.,
	$$\fsh{f\odot p}=\fsh{f}\abs{p}.$$
Our final conclusion is  that para-linearity is the nonassociative counterpart of linearity.
  	\end{abstract}
 
\maketitle

\tableofcontents

	\section{Introduction}

What is   the nonassociative counterpart of linearity is
a natural question in the nonassociative realm.
This question has no answer even  in octonionic functional analysis
since there still lacks the octonionic   Riesz representation theorem which depends on the  maps abstracted  from the inner products.
In order to explore the reason for this lack, we start with the octonionic Hilbert theory.

The theory of octonionic Hilbert spaces
was initiated    by 	Goldstine and Horwitz \cite{goldstine1964hilbert}   in 1964 and
 has  many applications in the spectral theory \cite{ludkovsky2007Spectral},  operator theory \cite{ludkovsky2007algebras}, and physics \cite{deleo1996oqm,gunaydin1973quark,gunaydin1976OHilbert,Rembielinski1978tensorOHilbert}.
 But Compared with the quaternionic  Hilbert space theory
 \cite{horwitz1993QHilbertmod,razon1992Uniqueness,
  razon1991projection,soffer1983quaternion,viswanath1971normal}, the development of the octonionic Hilbert space theory is lagging behind.
    Nonetheless, the existing research on the {octonionic} Hilbert space theory has emerged some effective methods.
    For example, Goldstine and Horwitz \cite{goldstine1964hilbert}
     {attribute} the study of the {octonionic} Hilbert space to   the associative setting;
  see \cite{goldstine1964hilbert2,Saworotnow1968generalize} for the further development.
Ludkovsky and Spr\"ossig  \cite{ludkovsky2007algebras,ludkovsky2007Spectral} find the theory can get well developed if the $\O$-Hilbert space admits   the tensor decomposition  of a real Hilbert space with  the algebra of octonions $\O$.

Now we  focus on the definition of the  {octonionic} Hilbert space.

	\begin{mydef}[\cite{goldstine1964hilbert}]\label{def:gold}
	An $\O$-Hilbert space	  $H$ is a left $\O$-module with an  $\O$-inner product  $$\left\langle\cdot,\cdot \right\rangle :H\times H \rightarrow \mathbb{O}$$
		such that $(H, \re \left\langle\cdot,\cdot \right\rangle)$ is a real Hilbert space. Here the $\O$-inner product satisfies the following axioms for all $ u,v\in H$ and $p\in \O$:
			\begin{enumerate}[label=(\alph*)]
			\item  $\left\langle u+v,w\right\rangle=\left\langle u,w\right\rangle+\left\langle v,w\right\rangle$;
						\item  $\left\langle u ,v\right\rangle=\overline{\left\langle v ,u\right\rangle}$;
			\item  $\left\langle u ,u\right\rangle\in \spr^+$;  and $\left\langle u ,u\right\rangle=0$ if and only if $u=0$;
			\item $\left\langle tu ,v\right\rangle=t\left\langle u ,v\right\rangle$ for $t\in \R$;
			\item $\re \left\langle pu ,v\right\rangle=\re (p\left\langle u ,v\right\rangle)$;
			\item $\left\langle pu ,u\right\rangle=p\left\langle u ,u\right\rangle$.
		\end{enumerate}

	\end{mydef}
	
The functionals that need to be studied in the {octonionic} Hilbert space are naturally induced by the octonionic inner product
\begin{eqnarray}\label{eq:paralinear-095}
	f(x)=\langle x, v \rangle\end{eqnarray}
for any given element $v\in H$.  However, these real linear maps in  \eqref{eq:paralinear-095} are generally
  not octonionic linear  {maps} since   there may hold
$$f(px)\neq pf(x)$$
for some $p\in\O$ and $x\in H$.

 It is still an unsolved problem so far
how to define an appropriate functional in octonionic functional analysis. This is because in Definition \ref{def:gold} Axiom $(f)$, as a property of $\O$-inner-product, cannot be abstractly interpreted  into the property of a functional on a general $\O$-normed space.

Fortunately, we can overcome this  obstacle   in a roundabout way in this article.
 We find that
 Axiom $(f)$ is actually non-independent. This   motivates us to introduce a kind of maps beyond  $\O$-linearity, called   $\O$-para-linear maps,
   defined in  any $\O$-modules.

 An \textbf{$\O$-\almost linear} map in a left $\O$-module $M$
 is defined to be a real linear map $f\in \Hom_\R(M,\O)$ subject to the condition
\begin{eqnarray}\label{eq:def-para-104}\re [p,x,f]=0\end{eqnarray} for all $p\in \O$ and $x\in M$, where the element $$[p,x,f]:=f(px)-pf(x)$$
is called \textbf{the second associator} related to $f$.
Condition  \eqref{eq:def-para-104} becomes    Axiom (e) in Definition  \ref{def:gold}
 in the specific case where   $f$ is a map induced by the octonionic inner product as in \eqref{eq:paralinear-095}.

The set of \almost linear functions in a left $\O$-module constitutes  a right $\O$-module under  the multiplication
	\begin{eqnarray}\label{eq:f p}
	(f\odot  p)(x):=f(x)p-[p,x,f].
	\end{eqnarray}

Similarly,  $\mathbb K$-\almost linearity  can also be defined for any real normed division algebra
$$\mathbb K=\mathbb R, \spc, \mathbb{H}, \O,$$
where $\mathbb{H}$ is the algebra of quaternions.
    Since $\mathbb K$-\almost linearity  degenerates to linearity in the associative setting, this   shows      \almost linearity
is a  nonassociative counterpart of   linearity.

 $\mathbb K$-\almost linearity can be used to define  $\mathbb K$-Hilbert spaces.
In the specific case where  $\mathbb K=\O$, this definition  is belonging to
Goldstine and Horwitz \cite{goldstine1964hilbert} except that a non-independent axiom  is removed.

\begin{mydef} Suppose that  $H$ is a real Hilbert space under the real inner product $\langle \cdot,\cdot\rangle_{\mathbb  R}$
	and also a left $\mathbb{K}$-module. We call $H$ a \textbf{ $\mathbb{K}$-Hilbert space} if there exists an $\spr $-bilinear map $$\left\langle\cdot,\cdot \right\rangle :H\times H \rightarrow \mathbb{K}$$  such that
	$$Re \left\langle\cdot,\cdot \right\rangle=\langle \cdot,\cdot\rangle_{\mathbb  R}$$
	and
	satisfying
	\begin{enumerate}
		\item \textbf{($\mathbb{K}$-\almost linearity)} $\left\langle\cdot,u\right\rangle$ is left $\mathbb{K}$-\almost linear for all $u\in H$.
				\item \textbf{($\mathbb{K}$-hermiticity)} $\left\langle u ,v\right\rangle=\overline{\left\langle v ,u\right\rangle}$ for all $ u,v\in H$.
		\item \textbf{(Positivity)} $\left\langle u ,u\right\rangle\in \spr^+$;  and $\left\langle u ,u\right\rangle=0$ if and only if $u=0$.
	\end{enumerate}
\end{mydef}

 Now we can use    $\mathbb O$-\almost linearity to study  the Riesz representation theorem in   $\O$-Hilbert spaces  $H$.
 The theorem should be expressed in the following form
 \begin{eqnarray}\label{eq:iso-riesz-832}
	H^{*}\cong {H}.
\end{eqnarray}
Here  $H^*$ is the dual space  consisting  of $\O$-para-linear functions and
the bijection  in  \eqref{eq:iso-riesz-832}  between  two spaces is both an isometric isomorphism as  real Banach spaces and a conjugate isomorphism as  $\O$-modules.

Of the most difficult   is  to show  the compatibility
between the module structure of $H^*$ and  its  canonical  norm, i.e.,
	$$\fsh{f\odot p}=\fsh{f}\abs{p}$$ for all $f\in H^*$ and $p\in \O$,   where  $f\odot p$ is the right multiplication defined in \eqref{eq:f p}.
 %	by $$(f\odot p)(x)=f(x)p-[p,x,f].$$
 Overcoming this difficulty requires another expression  of   the right multiplication
	$${(f\odot p)(x)}=pf(p^{-1}x)p,$$
which  is a  generalized  Moufang identity \cite{schafer2017introduction}.

 The octonionic Riesz representation theorem in \eqref{eq:iso-riesz-832}  can be restated as
  \begin{eqnarray}\label{eq:iso-riesz-852}
	(H^{*})^-\cong {H}.
\end{eqnarray}
Here $(H^{*})^-$ denotes the set $H^{*}$ endowed
  with a left $\O$-module defined by
	$$p\cdot x:=x\overline{p}.$$
  The point is that
the  bijection  in  \eqref{eq:iso-riesz-852}  is  an  isomorphism  of two left $\O$-modules in contrast to the conjugate  isomorphism in \eqref{eq:iso-riesz-832}.

 Space $(H^{*})^-$ can be endowed with an $\O$-inner product via   isomorphism
\eqref{eq:iso-riesz-852}  so that it becomes an $\O$-Hilbert space.
 The octonionic Riesz representation theorem shows that
 the two $\O$-Hilbert  spaces in \eqref{eq:iso-riesz-852}   coincide
up to   isomorphism. This result is true if $\O$ is replaced by any normed division algebras $\mathbb K$.

\begin{thm}[Riesz representation theorem] Let $\mathbb K$
be a normed division algebra.
 Let  $H$  be  a $\mathbb K$-Hilbert space and  $H^*$   its dual consisting  of all continuous $\mathbb K$-para-linear functionals. Then we can identify the two $\mathbb K$-Hilbert spaces $H$ and $(H^*)^-$
up to   isomorphism.  \end{thm}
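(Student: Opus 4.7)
The plan is to construct the explicit map $\Phi : H \to (H^*)^-$ given by $\Phi(v)(x) := \langle x, v\rangle$ and to prove that it is a bijective isometry of real Banach spaces which moreover intertwines the two left $\mathbb K$-actions. Once this is done, transferring the $\mathbb K$-inner product of $H$ along $\Phi$ endows $(H^*)^-$ with a $\mathbb K$-Hilbert structure, making $\Phi$ the desired $\mathbb K$-Hilbert space isomorphism.

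The first step is well-definedness and the metric properties. The functional $\Phi(v)$ is $\mathbb R$-linear by $\mathbb R$-bilinearity of the inner product, $\mathbb K$-para-linear by the axiom $\re\langle px,v\rangle = \re(p\langle x,v\rangle)$, and bounded by the Cauchy--Schwarz inequality for the real inner product $\re\langle\cdot,\cdot\rangle$ (which controls $|\langle x,v\rangle|$ by polarisation). The estimate $|\Phi(v)(x)| \le \|x\|\,\|v\|$ gives $\|\Phi(v)\| \le \|v\|$, while testing on $x=v$ yields the reverse inequality; hence $\Phi$ is an $\mathbb R$-linear isometry and in particular injective.

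Surjectivity is the crucial analytic step. Given a continuous $\mathbb K$-para-linear $f\in H^*$, apply the classical real Riesz theorem to the continuous $\mathbb R$-linear functional $\re f$ on the real Hilbert space $(H,\re\langle\cdot,\cdot\rangle)$ to obtain a unique $v\in H$ with $\re f(x) = \re\langle x,v\rangle$ for every $x\in H$. To promote this to the $\mathbb K$-valued equality $f = \Phi(v)$, fix an orthonormal basis $\{e_\alpha\}$ of $\mathbb K$ with $e_0=1$ and use $y = \sum_\alpha \re(\bar e_\alpha y)\,e_\alpha$. The para-linearity of both $f$ and $\langle\cdot,v\rangle$ yields
\begin{equation*}
\re\bigl(\bar e_\alpha f(x)\bigr) = \re f(\bar e_\alpha x) = \re\langle \bar e_\alpha x, v\rangle = \re\bigl(\bar e_\alpha \langle x,v\rangle\bigr),
\end{equation*}
the middle equality being the defining property of $v$ applied to $\bar e_\alpha x$, so $f(x) = \langle x,v\rangle$ for every $x$.

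Finally, I would verify the intertwining $\Phi(pv) = \Phi(v) \odot \bar p$ with respect to the $(H^*)^-$ structure. Both sides are continuous $\mathbb K$-para-linear, so by the component trick of the previous paragraph it suffices to match their real parts at every $x$. Using hermiticity, the para-linearity of $\langle\cdot,x\rangle$, and the identity $\re(ab) = \re(ba)$ valid in any normed division algebra,
\begin{equation*}
\re\langle x,pv\rangle = \re\langle pv,x\rangle = \re\bigl(p\langle v,x\rangle\bigr) = \re\bigl(\langle x,v\rangle\bar p\bigr) = \re\bigl(\Phi(v)(x)\bar p - [\bar p,x,\Phi(v)]\bigr),
\end{equation*}
where the last equality uses that $\Phi(v)$ is para-linear so the second-associator contribution has vanishing real part. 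I expect this intertwining step to be the main obstacle: since $\mathbb K$-para-linearity is strictly weaker than $\mathbb K$-linearity in the octonionic case, scalars cannot be moved freely through $\Phi$, and the argument must be orchestrated with the second-associator bookkeeping and the Moufang-form $(f\odot p)(x)=pf(p^{-1}x)p$ already introduced. Once the intertwining is in hand, the transferred $\mathbb K$-inner product on $(H^*)^-$ is automatically hermitian, positive, and $\mathbb K$-para-linear, completing the identification.
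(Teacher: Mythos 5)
Your proposal is correct and follows essentially the same route as the paper: your $\Phi$ is the paper's map $\tau(v)=\langle\cdot,v\rangle$, surjectivity is obtained exactly as in the paper by applying the classical real Riesz theorem to $\re f$ and then invoking the fact that a para-linear functional is determined by its real part (the paper's Theorem \ref{thm: f in Hom(M,M')  equivalent:} and Corollary \ref{cor: A_p(x,f)=0 and f(px)=pf(x)}), and your intertwining identity $\Phi(pv)=\Phi(v)\odot\overline{p}$ is the paper's computation $\tau(y)\odot r=\tau(\overline{r}y)$ verified the same way through real parts and the vanishing of $\re[\,\cdot\,,\,\cdot\,,\,\cdot\,]$. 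The only cosmetic difference is that the paper also writes down the inverse map $\sigma$ explicitly and checks $\sigma\tau=\tau\sigma=\mathrm{id}$, and it derives the sharp bound $\abs{\langle x,v\rangle}\le\fsh{x}\,\fsh{v}$ from its octonionic Cauchy--Schwarz inequality rather than from the real one.
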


Opposed  to the associative setting, we have two distinct spaces
$$ H^{*_\spo} \subsetneq  H^{*}.$$
Here  $H^{*_\spo}$   consists of all continuous $\O$-linear functionals of $H$. Under   isomorphism \eqref{eq:iso-riesz-832},  $H^{*_\spo}$ is corresponding to the nucleus of $H$ consisting of associative elements. Symbolically,
 \begin{eqnarray*}
 	H^{*_\spo}\cong \huaa{H}.
 	\end{eqnarray*}
 This shows that the class of $\O$-linear functionals is too small to represent every element in an $\O$-Hilbert space.

We remark that there are profound relations between the   associative and nonassociative theories.
For example,  the algebra of octonions is closely related to   the Clifford algebra of  $6,7$ dimensions \cite{harvey1990spinors}, and the category of left $\spo$-modules is shown to be isomorphic to the category of left $\clifd{7}$-modules \cite{huo2021leftmod}.
 But
 the  octonionic theory has its intrinsic objects such as the $\O$-para-linear functionals as demonstrated in the octonionic  Riesz representation theorem.

After     para-linearity is introduced as a substitute for linearity in nonassociative framework, one can further study para-linear operators,
weak topology related to para-linear functionals,
octonionic   $C^*$   algebra,  and para-linear connections.

	\section{Preliminaries}\label{sec:preliminary}
	In this section we review 	
	some notations and basic properties about  octonions  $\O$ and  left $\O$-modules. We refer to \cite{baez2002octonions,huo2021leftmod}
	for more details.
	\subsection{Octonions}\label{subsec:O}
	The set of octonions $\spo$  is a nonassociative, noncommutative, normed division algebra over  $\spr$.   Let     $$e_0=1, \ e_1, \ \dots, \ e_7$$ be a basis of $\O$ as a real vector space subject to the 	  multiplication rule
	 	\begin{align}\label{eq:epsilon notation-348}
	&e_ie_j=\sum_{k=1}^7\epsilon_{ijk}e_k-\delta_{ij}.
	\end{align}
for any $i,j=1,\dots,7$. 	Here $\delta_{ij}$ is the Kronecker delta  and $\epsilon_{ijk}$ is   completely antisymmetric  with value 1 precisely when $$ijk = 123, 145, 176, 246, 257, 347, 365.$$

Let   $x=x_0+\sum_{i=1}^7x_ie_i$ 	be an  octonion with
	all $ x_i\in\spr$.
We define 	its conjugate
 $$\overline{x}:=x_0-\sum_{i=1}^7x_ie_i,$$
 its norm  $$|x|=\sqrt{x\overline{x}},$$
 and its real part     $$\re{x}=x_0=\frac{1}{2}(x+\overline{x}).$$
  	\subsection{$\O$-modules}
 	 We   recall  some basic notations and  results on $\O$-modules in this subsection.	
 We refer to
\cite{jacobson1954structure,Shestakov1974rightrep,
Shestakov2016bimod,huo2021leftmod}
  for     octonionic modules and even for  modules of  alternative algebras.

A left  $\O$-module  $M$ is
		a  real vector space $M$ endowed with an $\O$-scalar multiplication such that    the \textbf{left associator} is    left  alternative, i.e.,
  for all  $p,q\in\O$ and $  m\in M$ we have
	\begin{eqnarray}\label{eq:left ass}
	[p,q,m]=-[q,p,m].
	\end{eqnarray}
Here  the left associator is defined by
	$$[p,q,m]:=(pq)m-p(qm).$$	
Condition  \eqref{eq:left ass} is  equivalent to the requirement
	$$r(rm)=r^2m $$
for all 	$r\in \O$ and all $ m\in M$.
	
Of the most important identity involving  associators in a  left $\O$-module    $M$ is  the five-term identity
\begin{equation}\label{eq:intrdct ass id}
[p,q,r]m+p[q,r,m]=[pq,r,m]-[p,qr,m]+[p,q,rm]
\end{equation}
for all $p,q,r\in \O$ and $m\in M$.
This even holds when $M$ is  a   nonassociative algebra \cite{schafer2017introduction}.

In the theory of   left $\O$-modules,
associative and conjugate associative elements play key roles
(\cite{huo2021leftmod}). Let $M$ be a left $\O$-module.
An element $m\in M$ is called  an \textbf{associative element}  if
	$$(pq)m=p(qm)$$
and called
  a \textbf{conjugate associative element}  if
	$$(pq)m=q(pm).$$
The  set of all associative elements is called
		  the \textbf{nucleus}  of $M$ and denoted by $\huaa{M}$, i.e.,  $$\huaa{M}:=\{m\in M\mid [p,q,m]=0\text { for all } p,q \in \O\}.$$
	 The  set of all  conjugate associative elements  of  $M$ is denoted by $\hua{A}{-}{M}$, i.e.,    $$\hua{A}{-}{M}:=\{m\in M\mid (pq)m=q(pm) \text { for all } p,q \in \O\}.$$

	\begin{thm}[\cite{huo2021leftmod}]\label{thm:M=OA+OA^-}
		Let $M$ be a left $\O$-module. Then  $$M\cong\spo\huaa{M}\oplus {\spo}\hua{A}{-}{M}.$$
	\end{thm}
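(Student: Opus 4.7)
The plan is to realize the decomposition as the $\pm 1$ eigenspace splitting of a canonical $\R$-linear involution $\Omega\in\mathrm{End}_{\R}(M)$, whose construction is motivated by the Clifford-algebra picture. The equivalence between left $\spo$-modules and left $\clifd{7}$-modules in \cite{huo2021leftmod}, combined with the structural isomorphism $\clifd{7}\cong\R(8)\oplus\R(8)$, splits any $\clifd{7}$-module by the two central idempotents $\tfrac{1}{2}(1\pm\omega)$, where $\omega$ is the volume element of $\clifd{7}$. There must therefore be a corresponding involution $\Omega$ on $M$, built from the left-multiplication operators $m\mapsto e_i m$, whose $\pm 1$ eigenspaces recover the target submodules $\spo\huaa{M}$ and $\spo\hua{A}{-}{M}$.

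Concretely, I would fix an explicit version such as $\Omega(m):=e_1(e_2(e_3(e_4(e_5(e_6(e_7 m))))))$ and verify $\Omega^2=\mathrm{id}_M$ using the left alternative law $e_i(e_i m)=-m$ together with the five-term associator identity \eqref{eq:intrdct ass id}. Setting $P^\pm:=\tfrac{1}{2}(\mathrm{id}\pm\Omega)$ then produces a real vector space decomposition $M=P^+(M)\oplus P^-(M)$; one next checks that each $P^\pm(M)$ is in fact a left $\spo$-submodule by tracking how $\Omega$ intertwines with left multiplication by an arbitrary $p\in\spo$. To identify the eigenspaces: for $a\in\huaa{M}$ the bracketing in $\Omega(a)$ collapses by associativity to the scalar product $(e_1 e_2\cdots e_7)a=\pm a$, giving $\huaa{M}\subseteq P^+(M)$ and, by the submodule property, $\spo\huaa{M}\subseteq P^+(M)$. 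A parallel calculation in which a conjugate associative element reverses the order of the seven factors places $\spo\hua{A}{-}{M}$ in $P^-(M)$.

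The main obstacle will be the reverse containment $P^+(M)\subseteq\spo\huaa{M}$ and its mirror for $P^-$: the single eigenvalue condition $\Omega(m)=m$ is a priori far weaker than the universal associator vanishing $[p,q,m]=0$ required for membership in $\huaa{M}$. My plan for overcoming this is not a direct element-wise factorization but an appeal to the $\clifd{7}$-equivalence of \cite{huo2021leftmod}: each eigenspace becomes a module over a simple summand $\clifd{7}\cdot\tfrac{1}{2}(1\pm\omega)\cong\R(8)$, so pulling this simplicity back to the $\spo$-side forces the submodule to coincide with the one generated by the associator-free elements. The directness $\spo\huaa{M}\cap\spo\hua{A}{-}{M}=0$ then follows immediately from the disjointness of the $\pm 1$ eigenvalues of $\Omega$, and the displayed isomorphism is established.
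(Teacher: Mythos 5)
Your proposal is sound, but note first that the present paper never proves this statement: it is imported verbatim from \cite{huo2021leftmod}, and your Clifford-theoretic route is essentially a reconstruction of the argument of that reference rather than an alternative to anything in this text. Three local remarks. First, there is no circularity in invoking \cite{huo2021leftmod} and in fact you do not even need the full categorical equivalence: polarizing $r(rm)=r^2m$, i.e.\ condition \eqref{eq:left ass}, gives $e_i(e_jm)+e_j(e_im)=-2\delta_{ij}m$ directly, so the operators $L_i\colon m\mapsto e_im$ satisfy the $\clifd{7}$ relations and $M$ carries a $\clifd{7}$-module structure by inspection. Second, the five-term identity \eqref{eq:intrdct ass id} is not the right tool for $\Omega^2=\mathrm{id}_M$ or for the invariance of the eigenspaces under every $L_p$; both are formal consequences of the anticommutation relations alone, since $\Omega=L_1L_2\cdots L_7$ is the volume element of a Clifford algebra on an odd number of generators, hence central, and here squares to $+1$. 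Third, a sign slip: with the multiplication table \eqref{eq:epsilon notation-348} the collapsed scalar is $e_1(e_2(e_3(e_4(e_5(e_6e_7)))))=-1$ (indeed $e_6e_7=-e_1$, $e_5(-e_1)=-e_4$, $e_4(-e_4)=1$, $e_3\cdot 1=e_3$, $e_2e_3=e_1$, $e_1e_1=-1$), so $\huaa{M}$ lies in the $(-1)$-eigenspace and $\hua{A}{-}{M}$ in the $(+1)$-eigenspace; your labels $P^{\pm}$ must be interchanged (or $\Omega$ replaced by $-\Omega$), harmlessly, since all the argument needs is that the two scalars are opposite. With these repairs your acknowledged main step closes correctly: each eigenspace is a module over one simple summand $\R(8)$ of $\clifd{7}$, hence a direct sum of copies of the unique simple module of that summand; the two simples are $\spo$ with its regular structure and $\spo$ with the conjugate structure, distinguished precisely by the scalar through which $\Omega$ acts, and each is generated over $\spo$ by its associative (respectively conjugate associative) elements, so together with your containments this yields $P^{-}(M)=\spo\huaa{M}$, $P^{+}(M)=\spo\hua{A}{-}{M}$, and the directness of the sum from the eigenvalue splitting.
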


%	A real vector space $M$ is called a left (right) $\O$-module if the left (right) associator is  left (right) alternative:
%	$$[p,q,m]=-[q,p,m]\quad \text{ for all } p,q\in\O,\, m\in M,$$	
%	which is equivalent to the condition:
%	$$r(rm)=r^2m \text{ for all }r\in \O,\, m\in M.$$
%	Where the \textbf{left associator} is defined by
%	$$[p,q,m]=(pq)m-p(qm).$$
%	A left  $\O$-module $M$ is called an  \textbf{ $\O$-bimodule} (definition see \cite{huoqinghai2020bimodule})  if  the  \ass\ is alternative:
%	$$[p,q,m]=[m,p,q]=[q,m,p]$$
%	for all $p,q\in \O$, and $m\in M$.
	%Similarly we can define the right associator and middle associator on $\O$-bimodules.	
	
%In a left $\O$-module $M$,	the following identity is very important
%	\begin{equation}\label{eq:[p,q,r]m+p[q,r,m]=[pq,r,m]-[p,qr,m]+[p,q,rm]}
%	[p,q,r]m+p[q,r,m]=[pq,r,m]-[p,qr,m]+[p,q,rm]
%	\end{equation}
%	for all $p,q,r\in \O$ and all $m\in M$. %Where  the \textbf{left associator} of $M$  is given by $[p,q,m]:=(pq)m-p(qm)$.

\section{$\spo$-\almost linear functions}\label{sec:almost linearity}

The notion of the  second   associator
 is used to describe the extent to which a real linear map  is far away from the octonionic linear map.
This notion also allows us to define
 the  \almost linear map.

\begin{mydef} Let $M$ be a left $\O$-module and let    $ \Hom_{\mathbb R}(M,\O)$   be the set of all real linear functions $f: M \to\O$.
We introduce   a real trilinear map
$$[\cdot, \cdot, \cdot]: \O\times M\times \Hom_{\mathbb R}(M,\O)\xlongrightarrow[]{} \O,$$
defined by
\begin{equation}\label{def:sec-asso-294}[p,x,f]:=f(px)-pf(x).\end{equation}
  We call this map  \textbf{the second associator} in  $M$, which measures the failure of $\O$-linearity.

\end{mydef}

\begin{mydef} Let $M$ be a left $\O$-module.
A real linear map $f: M \xlongrightarrow[]{} \O$
is called an \textbf{$\O$-\almost linear} function if it satisfies
$$\re [p,x,f]=0$$
for any $p\in\O$ and $x\in M$.
 \end{mydef}

We denote by 	$\Hom_{L\O}(M,\O)$ the set of all $\O$-\almost linear functions.  By definition, we have

$$\Hom_{L\O}(M,\O)=\{
f\in \Hom_{\mathbb R}(M,\O): \  \re [p,x,f]=0 \ \
\mbox{for \  all \ $p\in\O$ \ and  \ $x\in M$ }
\}.
$$

 \begin{eg}\label{eg:Rp} The right multiplication on $\O$ provides a typical example of  $\O$-\almost linear function, i.e.,
 $$R_q\in  \Hom_{L\O}(\O,\O)$$ for any $q\in \O$.
Here $R_q$ denotes the
 right multiplication   defined by $$R_q(x)=xq.$$
 In this typical example, we find that the associator and the second associator coincide, i.e.,
\begin{equation}\label{eq:two-alt-993}[p,x,R_q]=[p,x,q] \end{equation}
 for any $p, x, q\in \spo$.
 Indeed, by definition we have
 $$[p,x,R_q]=R_q(px)-pR_q(x)=(px)q-p(xq)=[p,x,q]. $$
Identity \eqref{eq:two-alt-993}  also   implies    $$\re [p,x,R_q]=0 $$
so that $R_q$ is $\O$-\almost linear.	
\end{eg}

\bigskip

   If  $f\in \Hom_\spr(M,\O)$, then it can be   written  as    \begin{eqnarray}\label{eq:def-exp-rea-746}f(x)=f_{\R}(x)+\sum_{i=1}^7
 e_if_i(x),\end{eqnarray} where  $$f_{\R}(x):=\re f(x)$$ and  $f_i(x)\in \R\text{ for } i=1,\dots,7$.

 Now we give a criterion for determining whether a real linear function  $f\in \Hom_\spr(M,\O)$ is an  $\O$-\almost linear map.

\begin{thm}\label{thm: f in Hom(M,M')  equivalent:}
   Let $f\in \Hom_\spr(M,\O)$ with expression as in \eqref{eq:def-exp-rea-746}. Then
	  $f$ is $\O$-\almost linear if and only if
\begin{eqnarray}\label{eq:rel-real-346}f_i(x)=f_{\R}(\overline{e_i}x)\end{eqnarray} for all  $i=0,\dots ,7$ and all $x\in\O$.
	\end{thm}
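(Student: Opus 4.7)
My plan is to reduce the condition $\re[p,x,f]=0$ to a set of eight scalar identities indexed by the standard basis of $\O$. The key observation is that for fixed $x \in M$, the map $p \mapsto \re[p,x,f] = \re f(px) - \re(pf(x))$ is $\R$-linear in $p$, so $f$ is $\O$-\almost linear if and only if $\re f(e_i x) = \re(e_i f(x))$ for each $i = 0,\ldots,7$. This localizes the verification to basis elements and reduces the whole theorem to a coordinate calculation.

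For the forward direction I fix $i \geq 1$ and expand $\re(e_i f(x))$ using $f(x) = f_\R(x) + \sum_{j=1}^{7} e_j f_j(x)$. Since $\{e_0,\ldots,e_7\}$ is real-orthonormal under $(a,b) \mapsto \re(\overline{a}b)$, one has $\re(e_i) = 0$ and $\re(e_i e_j) = -\delta_{ij}$ for $i,j \geq 1$, so only one term survives and the right-hand side collapses to $-f_i(x)$. The left-hand side is $\re f(e_i x) = f_\R(e_i x)$. Equating and using $\overline{e_i} = -e_i$ for $i \geq 1$ gives $f_i(x) = f_\R(\overline{e_i}x)$, and the $i = 0$ case is tautological since $f_0 = f_\R$ and $\overline{e_0} = e_0$.

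For the reverse direction I substitute $f_i(x) = f_\R(\overline{e_i}x)$ back into the expansion to obtain
\[
f(x) = f_\R(x) - \sum_{i=1}^{7} e_i \, f_\R(e_i x),
\]
then expand $p = \sum_{j=0}^{7} p_j e_j$ and compute $\re(p f(x))$ via the same inner-product identities. Term-by-term the cross products collapse to $\sum_{i=0}^{7} p_i \, f_\R(e_i x)$, which is $f_\R(px)$ by the $\R$-linearity of $f_\R$; this in turn equals $\re f(px)$. Hence $\re[p,x,f] = 0$ for every $p \in \O$ and $x \in M$.

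The argument is essentially pure bookkeeping and I do not anticipate a conceptual obstacle. The only points demanding care are tracking the sign $\overline{e_i} = -e_i$ for $i \geq 1$ and the correct use of the values of $\re(e_i e_j)$; recognizing at the outset that $\R$-linearity in $p$ permits reduction to a basis substantially shortens what would otherwise be a bulkier identity-chase.
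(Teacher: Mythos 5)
Your proof is correct and follows essentially the same route as the paper: both directions reduce to checking the basis octonions $e_i$ and rest on the same orthogonality computation $\re (e_ie_j)=-\delta_{ij}$. The only cosmetic difference is that in the converse you track real parts throughout for a general $p$, whereas the paper first computes the full octonion $e_if(x)$ and then extracts its real part.
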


\begin{proof} We assume $f$  $\O$-\almost linear.
By definition, we have
\begin{align*}
f_\R(\overline{e_i}x)&=\re f(\overline{e_i}x)\\
&=\re (\overline{e_i}f(x))\\
&=\re \Big(\overline{e_i}\sum_{j=0}^7 e_jf_j(x)\Big)\\
&=f_i(x).
\end{align*}
%	Let $A_p(x,f)=\sum e_jy_j$, then
%	\begin{align*}
%		y_j&=-\mathit{Re}(e_jA_p(x,f))=-\mathit{Re}(e_j(f(px)-pf(x)))\\
%		&=-\mathit{Re}\left(f(e_j(px))-e_j(pf(x))\right)\\
%		&=-\mathit{Re}\left(f((e_jp)x-[e_j,p,x])-(e_jp)f(x)+[e_j,p,f(x)]\right)\\
%		&=-\mathit{Re}\left(A_{e_jp}(x,f)-f[e_j,p,x]+[e_j,p,f(x)]\right)\\
%		&=f_{\R}([e_j,p,x]).
%		\end{align*}
%This is the desired conclusion.

Conversely, under assumptions  \eqref{eq:rel-real-346}, for any $i=1, \ldots, 7$ we claim
\begin{eqnarray}\label{eq:calim-093}
e_if(x)
= e_if_{\R}(x)+\sum_{j,k=1}^7
 \epsilon_{ijk}e_k f_j(x)- f_i(x)
\end{eqnarray}
Indeed, by calculation we have
\begin{eqnarray*}
e_if(x)&=&e_i(f_{\R}(x)+\sum_{j=1}^7
 e_jf_j(x))
 \\
 &=& e_if_{\R}(x)+\sum_{j=1}^7
 e_ie_jf_j(x)
 \\
 &=& e_if_{\R}(x)+\sum_{j,k=1}^7
 \epsilon_{ijk}e_k f_j(x)- f_i(x)
\end{eqnarray*}
and the claim holds. From the claim we have
\begin{eqnarray*}\re [e_i, x, f]&=&Re(f(e_i x)-e_if(x))
\\
 &=& f_{\R}(e_ix)+f_i(x)=0.
\end{eqnarray*}
This finishes the proof.

\end{proof}

The second associator of a $\O$-para-linear map in $\Hom_{L\O}(M,\O)$ has a close relation with the associator in $\O$.
\begin{thm}\label{thm: f in Hom(M,M')  equivalent-874}
   Let $f\in \Hom_\spr(M,\O)$ with expression as in \eqref{eq:def-exp-rea-746}. Then
	  $f$ is $\O$-\almost linear if and only if
\begin{eqnarray}\label{eq:rel-real-7 }
[p,x,f]=\sum_{i=1}^7
	e_i f_{\R}([e_i,p,x])\end{eqnarray} for all  $i=0,\dots ,7$ and all $p\in \O$ and  $x\in M$.
	\end{thm}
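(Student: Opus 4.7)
The plan is to prove the two implications separately, with a single auxiliary fact doing most of the work: under para-linearity, the identity
\[
\re\bigl(qf(y)\bigr)=f_\R(qy)\qquad (q\in\O,\ y\in M)
\]
holds, since it is nothing but the defining condition $\re[q,y,f]=0$ rewritten via $\re f(qy)=f_\R(qy)$.

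For the ``if'' direction I would observe that the right-hand side $\sum_{i=1}^{7}e_i f_\R([e_i,p,x])$ is a real linear combination of the imaginary units $e_1,\dots,e_7$ and hence has vanishing real part; so the proposed formula automatically forces $\re[p,x,f]=0$, which is exactly $\O$-para-linearity.

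For the ``only if'' direction, para-linearity already gives $[p,x,f]\in\im\O$, and a direct check in the real basis of $\O$ shows that every purely imaginary element $z$ satisfies $z=-\sum_{i=1}^{7}e_i\re(e_i z)$ (using $\re(e_ie_j)=-\delta_{ij}$ for $i,j\ge 1$); specialising to $z=[p,x,f]$ gives
\[
[p,x,f]=-\sum_{i=1}^{7}e_i\,\re\!\bigl(e_i[p,x,f]\bigr).
\]
I would then expand $e_i[p,x,f]=e_if(px)-e_i(pf(x))$ and compute the two resulting real parts separately. The first is immediate from the auxiliary identity: $\re(e_if(px))=f_\R(e_i(px))$. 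For the second I first reparenthesise, using that the associator $[e_i,p,f(x)]$ in $\O$ has zero real part, to rewrite $\re(e_i(pf(x)))=\re((e_ip)f(x))$, and a second application of the auxiliary identity yields $f_\R((e_ip)x)$. Subtracting, the $i$-th coefficient equals $-f_\R(e_i(px))+f_\R((e_ip)x)=f_\R([e_i,p,x])$, exactly as claimed.

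The only delicate point is the nonassociativity of $\O$; it is confined to the single reparenthesisation above and is handled by the standard fact that the octonionic associator is purely imaginary, itself a consequence of the alternativity relation recalled in the preliminaries.
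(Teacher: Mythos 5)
Your proof is correct, and the necessity direction takes a genuinely different route from the paper's. The paper first reduces to $p=e_i$, invokes the coefficient formula $f_i(x)=f_{\R}(\overline{e_i}x)$ of Theorem \ref{thm: f in Hom(M,M')  equivalent:} to write $f(x)=f_{\R}(x)-\sum_j e_jf_{\R}(e_jx)$, substitutes $e_ix$ for $x$, and then grinds through the structure constants $\epsilon_{ijk}$ before subtracting the expansion of $e_if(x)$. You instead work with a general $p$ throughout: you note that para-linearity makes $[p,x,f]$ purely imaginary, reconstruct it via $z=-\sum_{i=1}^{7}e_i\re(e_iz)$, and compute each coefficient $\re\bigl(e_i[p,x,f]\bigr)$ by two applications of the identity $\re(qf(y))=f_{\R}(qy)$ (which is just para-linearity restated), with the single reassociation $\re\bigl(e_i(pf(x))\bigr)=\re\bigl((e_ip)f(x)\bigr)$ justified by the purely imaginary octonionic associator --- a fact the paper itself uses elsewhere (e.g.\ $\re[p,f(x),r]=0$ in Theorem \ref{Thm:left, bimod Hom(M,M')}), though it follows from the associativity of the trace form rather than from alternativity per se. The signs check out: $\re(e_i[p,x,f])=f_{\R}(e_i(px))-f_{\R}((e_ip)x)=-f_{\R}([e_i,p,x])$, which combined with the reconstruction formula gives exactly \eqref{eq:rel-real-7 }. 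Your argument buys independence from Theorem \ref{thm: f in Hom(M,M')  equivalent:} and avoids the $\epsilon_{ijk}$ bookkeeping entirely, at the cost of invoking the purely-imaginary-associator fact; the paper's version buys an explicit link to the coefficient description of para-linear maps. The sufficiency direction is identical in both.
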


\begin{proof} To prove the necessity, we might as well set  $p=e_i$ with $i=1,\ldots, 7$. Since $f$ is $\O$-\almost linear, Theorem \ref{thm: f in Hom(M,M')  equivalent:} shows
\begin{eqnarray}\label{eq:para-iden-23}f(x)=f_{\R}(x)-\sum_{j=1}^7 e_j f_{\R}(e_j x).\end{eqnarray}
Notice that
\begin{eqnarray}\label{eq:para-iden-263} e_j(e_ix)=(e_je_i)x-[e_j,e_i,x]=\sum_{k=1}^7\epsilon_{jik}e_k x-\delta_{ij}x-[e_j,e_i,x].\end{eqnarray}
Now we replace $x$ by $e_ix$ in \eqref{eq:para-iden-23} and then apply \eqref{eq:para-iden-263}. Due to the real linearity of $f_{\R}$ we have
\begin{eqnarray}\label{eq:para-iden-293}
f(e_ix)=f_{\R}(e_ix)
-\sum_{j,k=1}^7 \epsilon_{jik}e_jf_{\R}(e_k x)+e_i f_{\R}(x)+\sum_{j=1}^7 e_j f_{\R}([e_j, e_i,x]).
 \end{eqnarray}

On  the other hand we have shown in \eqref{eq:calim-093}  that
\begin{eqnarray}\label{eq:calim-433}
e_if(x)
= e_if_{\R}(x)+\sum_{j,k=1}^7
 \epsilon_{ijk}e_k f_j(x)- f_i(x)
\end{eqnarray}
 We subtract both sides in \eqref{eq:para-iden-293}  and \eqref{eq:calim-433}. Since $\epsilon_{ijk}$ is   completely antisymmetric, we apply the fact that
$$f_i(x)=-f_{\R}( {e_i}x)$$
 and identity  \eqref{eq:rel-real-346}  to conclude
{\begin{eqnarray*}\label{eq:rel-real-876}
[e_i,x,f]=f(e_ix)-e_if(x)=\sum_{j=1}^7
	e_j f_{\R}([e_j,p,x]).\end{eqnarray*}}

Conversely, if conditions \eqref{eq:rel-real-7 }  hold, then $$\re[p, x, f]=0$$ so that $f$ is $\O$-para-linear by definition.
 \end{proof}		

\begin{rem}\label{rem:linear coincide}
Theorems \ref{thm: f in Hom(M,M')  equivalent:} and \ref{thm: f in Hom(M,M')  equivalent-874} hold if    $\O$ is replaced by any Cayley-Dickson algebra $\mathbb{K}$ (see \cite{harvey1990spinors})
under  the similar   notion of  $\mathbb{K}$-\almost linear functions.
  $\mathbb{K}$-\almost linearity becomes   $\mathbb{K}$-linearity	when  $\mathbb{K}$ is   an associative normed division algebra because    the   associators in
    \eqref{eq:rel-real-7 }
   vanish in this setting. As a result, the notion of \almost linearity is a canonical generalization of the classical linearity from  the associative  to the nonassociative realm.

\end{rem}

As direct consequences of
Theorems \ref{thm: f in Hom(M,M')  equivalent:} and \ref{thm: f in Hom(M,M')  equivalent-874},
 the second associator  vanishes for associative element and an $\spo$-\almost linear map is  uniquely determined by its real part.

\begin{cor}\label{cor: A_p(x,f)=0 and f(px)=pf(x)}
Assume that  $f:M\to \O$ is $\O$-\almost linear.
\begin{enumerate}
	\item For each \asselm\ $x\in \huaa{M}$, we have $$[p,x,f]=0$$ and hence for all $p\in \O$ $$f(px)=pf(x).$$
	\item  $f_{\R}=0$  if and only if  $f= 0$.
	\end{enumerate}

\end{cor}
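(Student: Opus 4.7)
The plan is to derive both parts of the corollary as immediate consequences of Theorems \ref{thm: f in Hom(M,M')  equivalent:} and \ref{thm: f in Hom(M,M')  equivalent-874}, which give explicit formulas for the second associator and for $f$ itself in terms of its real part $f_{\mathbb R}$. No extra calculation beyond unpacking definitions should be needed.

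For assertion (1), I would invoke Theorem \ref{thm: f in Hom(M,M')  equivalent-874}, which gives the identity
$$[p,x,f]=\sum_{i=1}^7 e_i\, f_{\mathbb R}([e_i,p,x])$$
for all $p\in\O$ and $x\in M$. If $x\in\huaa{M}$, then by the very definition of the nucleus every left associator $[e_i,p,x]$ vanishes. Hence each summand on the right-hand side is zero, giving $[p,x,f]=0$, and in particular $f(px)=pf(x)$ for every $p\in\O$. (Alternatively one can note that the sum is $\O$-valued but has zero real part by para-linearity and zero imaginary part by the associativity of $x$, forcing it to vanish.)

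For assertion (2), the nontrivial direction is $f_{\mathbb R}=0\Rightarrow f=0$. Writing $f$ as in \eqref{eq:def-exp-rea-746}, Theorem \ref{thm: f in Hom(M,M')  equivalent:} gives $f_i(x)=f_{\mathbb R}(\overline{e_i}x)$ for $i=1,\dots,7$. If $f_{\mathbb R}\equiv 0$, each $f_i$ vanishes identically, and thus
$$f(x)=f_{\mathbb R}(x)+\sum_{i=1}^{7} e_i f_i(x)=0.$$
The converse direction is immediate since $f_{\mathbb R}=\re f$. Thus $f$ is completely determined by its real part, as claimed.

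No real obstacle is expected here: both statements are essentially direct readouts of the structural theorems just proved, and the only subtlety is recognizing that the left associator appearing in Theorem \ref{thm: f in Hom(M,M')  equivalent-874} is exactly what the nucleus $\huaa{M}$ is defined to kill.
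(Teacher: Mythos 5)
Your proposal is correct and follows exactly the route the paper intends: the paper offers no separate argument, simply declaring the corollary a direct consequence of Theorems \ref{thm: f in Hom(M,M')  equivalent:} and \ref{thm: f in Hom(M,M')  equivalent-874}, and your unpacking (the associator formula kills $[p,x,f]$ on the nucleus, and the formula $f_i(x)=f_{\R}(\overline{e_i}x)$ shows $f$ is determined by $f_{\R}$) is precisely the intended verification.
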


\bigskip

To study  the $\O$-module structure of $\Hom_{L\O}(M,\O)$, we need  some useful  identities about the second associators.
	\begin{prop}\label{prop: left,bimod associator } Suppose that
	   $f\in \Hom_{L\O}(M,\O)$.  Then for any   $p,q\in \spo$ and $x\in M$ we have
	\begin{align}
 	p[p,x,f]&=[p,x,f]\overline{p}=[p,\overline{p}x,f];
 \label{eq:Omod left bi pAp(xf)=Ap(xf)p}
  \\
	\re\left(f([p,q,x])\right)&=\re \left([p,x,f]{q}\right); \label{eq:Omod left bi Re(f[pqx])=Re(Ap(x,f)q)}\\
	\re\left([p,x,f]q\right)&=-\re\left([q,x,f]p\right).\label{eq:Omod left bi Re(Ap(xf)q=-Re(Aq(xf)p))}
	\end{align}	
	%	\begin{enumerate}
	%		\item $\re\left(f([p,q,x])\right)=\re \left(A_p(x,f)\overline{q}\right)$;
	%		\item $pA_p(x,f)=A_p(x,f)\overline{p}$;
	%		
	%		\item $\re\left(A_p(x,f)q\right)=-\re\left(A_q(x,f)p\right)$.
	%	\end{enumerate}
\end{prop}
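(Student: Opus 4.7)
The plan is to reduce all three identities to the compact formula
$$[p, x, f] = \sum_{j=1}^{7} e_j f_{\R}([e_j, p, x])$$
furnished by Theorem \ref{thm: f in Hom(M,M')  equivalent-874}, which converts statements about the second associator $[p, x, f]$ into statements about $f_{\R}$ applied to the module associator in $M$. The master computation is the following: for any $q = q_0 + \sum_k q_k e_k \in \O$, multiplying the formula on the right by $q$ and extracting the real part gives
$$\re([p, x, f]\,q) = \sum_{j=1}^{7} f_{\R}([e_j, p, x])\, \re(e_j q) = -\sum_{j=1}^{7} q_j f_{\R}([e_j, p, x]) = -f_{\R}([\pureim{q}, p, x]).$$
Because $[1, p, x] = 0$, this collapses to $-f_{\R}([q, p, x]) = f_{\R}([p, q, x])$ using the antisymmetry $[p, q, x] = -[q, p, x]$ of the left alternative module associator.

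Identities \eqref{eq:Omod left bi Re(f[pqx])=Re(Ap(x,f)q)} and \eqref{eq:Omod left bi Re(Ap(xf)q=-Re(Aq(xf)p))} fall out at once. For \eqref{eq:Omod left bi Re(Ap(xf)q=-Re(Aq(xf)p))}, exchanging $p \leftrightarrow q$ flips the sign of $f_{\R}([p, q, x])$, producing $\re([p, x, f]\,q) = -\re([q, x, f]\,p)$. For \eqref{eq:Omod left bi Re(f[pqx])=Re(Ap(x,f)q)}, note $\re f([p, q, x]) = f_{\R}([p, q, x])$ by the definition of $f_{\R}$, which matches $\re([p, x, f]\,q)$ by the master computation.

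For identity \eqref{eq:Omod left bi pAp(xf)=Ap(xf)p}, I specialize the master computation to $q = p$, obtaining $\re(p\,[p, x, f]) = -f_{\R}([p, p, x]) = 0$ because the module associator is alternating in its first two slots. Combined with the fact that $a := [p, x, f]$ is purely imaginary (the defining condition of $\O$-para-linearity), I invoke the quadratic-algebra identity $pa + ap = 2\re(p)\,a + 2\re(a)\,p - 2\re(p\overline{a})$ valid in any composition algebra: with $\re(a) = 0$ and $\re(p\overline{a}) = -\re(pa) = 0$ this reduces to $pa + ap = 2\re(p)\,a$, equivalent to $pa = a\overline{p}$. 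For the second equality, a direct calculation yields $p\,[p, x, f] = pf(px) - p^2 f(x)$ by left alternativity of $\O$ as a left $\O$-module, while $[p, \overline{p}x, f] = |p|^2 f(x) - pf(\overline{p}x)$ because $p(\overline{p}x) = |p|^2 x$; these coincide because $f((p + \overline{p})x) = 2\re(p) f(x)$ by real linearity, and combining with the first equality gives $[p, x, f]\,\overline{p} = [p, \overline{p}x, f]$.

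The main subtlety is the vanishing $\re(p\,[p, x, f]) = 0$: unlike the defining condition $\re[p, x, f] = 0$ of $\O$-para-linearity, this extra vanishing is not formal, and it hinges crucially on plugging the same $p$ into both slots so that the formula from Theorem \ref{thm: f in Hom(M,M')  equivalent-874} collapses through the alternating property $[p, p, x] = 0$. Once this observation is in place, the first equality of \eqref{eq:Omod left bi pAp(xf)=Ap(xf)p} reduces to an algebraic identity about purely imaginary octonions, and the rest follows by routine manipulation in alternative algebras.
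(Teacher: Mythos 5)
Your proof is correct and follows essentially the same route as the paper: both arguments rest on the formula $[p,x,f]=\sum_j e_j f_{\R}([e_j,p,x])$ from Theorem \ref{thm: f in Hom(M,M')  equivalent-874} together with the antisymmetry of the module associator, and identities \eqref{eq:Omod left bi Re(f[pqx])=Re(Ap(x,f)q)} and \eqref{eq:Omod left bi Re(Ap(xf)q=-Re(Aq(xf)p))} are obtained in the same way. The only (cosmetic) divergence is in the first equality of \eqref{eq:Omod left bi pAp(xf)=Ap(xf)p}, where the paper expands $p[p,x,f]-[p,x,f]\overline{p}$ in coordinates using $e_ie_k+e_ke_i=-2\delta_{ik}$, whereas you repackage the same antisymmetry input as the vanishing $\re(p[p,x,f])=0$ and then invoke the composition-algebra identity $pa=a\overline{p}$ for purely imaginary $a$ orthogonal to $p$ --- a slightly cleaner but not genuinely different argument.
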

\begin{proof}
	As usual,	we write  $$f(x)=f_{\R}(x)+\sum_{i=1}^7 e_if_i(x).$$
	%where $f_j(x)\in \re M',\ j=0,1,\dots,7$.

	To prove
	\eqref{eq:Omod left bi pAp(xf)=Ap(xf)p}, without loss of generality we may assume that  $p\in \pureim{\spo}$. Let $p=\sum_{i=1}^7 p_ie_i$ with $ p_i\in \spr$. Then by Theorem \ref{thm: f in Hom(M,M')  equivalent-874}, we have
	\begin{align*}
	p[p,x,f]&=\sum_{i,j=1}^7 p_ip_je_i[e_j,x,f]\\
	&=\sum_{i,j,k=1}^7 p_ip_j e_i e_kf_{\R}([e_k,e_j,x]).
	\end{align*}
%	$$p[p,x,f]=\sum_{i,j=1}^7 p_ie_i[p_je_j,x,f]=\sum_{i,j,k=1}^7 p_ip_j e_i e_kf_{\R}([e_k,e_j,x]),$$
	Similarly, we get
	\begin{align*}
	[p,x,f]\overline{p}&=\sum_{i,j=1}^7 p_ip_j[e_j,x,f]\overline{e_i}\\
	&=-\sum_{i,j,k=1}^7 p_ip_je_ke_if_{\R}([e_k,e_j,x]).
	\end{align*}
	%$$[p,x,f]\overline{p}=\sum_{i,j=1}^7 [p_je_j,x,f]\overline{p_ie_i}=-\sum_{i,j,k=1}^7 p_ip_je_ke_if_{\R}([e_k,e_j,x]).$$
	Since $$e_i e_k+e_ke_i=-2\delta_{ik},$$
	it follows that
	$$p[p,x,f]-[p,x,f]\overline{p}=-2\sum _{i,j=1}^7 p_ip_jf_{\R}([e_i,e_j,x]).$$
	Because of the skew symmetricity of the associator, we get
$$p[p,x,f]=[p,x,f]\overline{p}.$$
By direct calculations, we get
\begin{eqnarray*}
[{p},\overline{p}x,f]&=&f(\abs{p}^2x)-pf({\overline{p}}x)\\
&=&\abs{p}^2f(x)-pf({\overline{p}}x)\\
&=&p(\overline{p}f(x))-pf({\overline{p}}x)\\
&=&-p[\overline{p},x,f]\\
&=&p[{p},x,f].
\end{eqnarray*}
%\begin{eqnarray*}\label{eqpf:Ar(rxf)=r-Ar(xf)}
%[{p},px,f]&=&-
%[\overline{p},px,f]
%\\
%&=&-f(\abs{p}^2x)+{\overline{p}}f(px)\notag\\
%&=&-(\abs{p}^2f(x)-{\overline{p}}f(px))\notag\\
%&=&-(\overline{p}(pf(x))-{\overline{p}}f(px))\notag
%\\
%&=&\overline{p} [ {p},x,f]
%\\
%&=&-\overline{p} [\overline{p},x,f]
%\\
%&=& -[\overline{p},x,f]  {p}
%\\
%&=& [ {p},x,f]  {p}.
%\end{eqnarray*}

	Now we come to prove  \eqref{eq:Omod left bi Re(f[pqx])=Re(Ap(x,f)q)}.
	without loss of generality we may assume   $q=e_i$ with   $i=1,\dots,7$.
	By direct calculations, we have
	\begin{align*}
	\re \left([p,x,f]{q}\right)&=\re\Big(\sum_{j=1}^7 (e_jf_{\R}([e_j,p,x]))e_i\Big)\\
	&=\re\Big(\sum_{j=1}^7 f_{\R}([e_j,p,x]) e_je_i\Big)\\
	&=-f_{\R}([e_i,p,x])\\
	&=\re\big(f([p,q,x])\big).
	\end{align*}
 	
	Identity \eqref{eq:Omod left bi Re(Ap(xf)q=-Re(Aq(xf)p))} follows from \eqref{eq:Omod left bi Re(f[pqx])=Re(Ap(x,f)q)} directly.
	%It  follows directly from \eqref{eq:Omod left bi Re(f[pqx])=Re(Ap(x,f)q)} that
	%	\eqref{eq:Omod left bi Re(Ap(xf)q=-Re(Aq(xf)p))} holds true.
	%	 It's clear that both sides are 0 if $p$ or $q$ is real number. Let  $p=e_i,\ q=e_j$, then    by the same method as above, we have,
	%	\begin{align*}
	%	\re\big(A_{e_i}(x,f)e_j\big)&=\re \left(\sum e_ke_jf_{\R}[e_k,e_i,x]\right)=-f_{\R}[e_j,e_i,x]
	%	\intertext{and}
	%	\re\big(A_{e_j}(x,f)e_i\big)&=-f_{\R}[e_i,e_j,x]=f_{\R}[e_j,e_i,x]
	%	\end{align*}
	%	This proves \eqref{eq:Omod left bi Re(Ap(xf)q=-Re(Aq(xf)p))} in case $p=e_i,\ q=e_j$, the general case follows.
\end{proof}
\begin{rem}
Identity \eqref{eq:Omod left bi pAp(xf)=Ap(xf)p}	generalizes the third identity of Lemma 3.9 in  \cite{Grigorian2017octonionbundles}.
\end{rem}

Utilizing Proposition \ref{prop: left,bimod associator }, we can  endow $\Hom_{L\O}(M,\O)$ with  a canonical  right $\spo$-module structure.
Let $r\in \O$,   $f\in \Hom_{L\O}(M,\O)$, and $x\in M$. We define  the right scalar multiplication   \begin{eqnarray}\label{eqdef:<x,fr>}
	(f\odot r)(x)\coloneqq\ f(x)r-[r,x,f].
	\end{eqnarray}

	\begin{thm}\label{Thm:left, bimod Hom(M,M')}
	If  $M$ be a left $\spo$-module, then $\Hom_{L\O}(M,\O)$ is a   right $\spo$-module.\end{thm}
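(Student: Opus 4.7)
The plan is to verify the three axioms that define a right $\O$-module structure via the operation $\odot$ of \eqref{eqdef:<x,fr>}: first, that $f\odot r$ lies again in $\Hom_{L\O}(M,\O)$ (closure); second, that $\odot$ is $\R$-bilinear in the pair $(f,r)$; and third, that the right-alternative law $(f\odot r)\odot r=f\odot r^{2}$ holds, which, since $\O$ is generated by alternativity from any two elements, is exactly what is needed beyond bilinearity to get a right $\O$-module structure.

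For closure, I would expand the second associator
$$[p,x,f\odot r]=(f\odot r)(px)-p(f\odot r)(x)$$
using the definition of $\odot$ and the octonionic associator $[p,f(x),r]$. Regrouping gives
$$[p,x,f\odot r]=[p,x,f]r-[r,px,f]+[p,f(x),r]+p[r,x,f].$$
Taking real parts, $\re[p,f(x),r]=0$ by alternativity of $\O$. Then using identity \eqref{eq:Omod left bi Re(f[pqx])=Re(Ap(x,f)q)} of Proposition \ref{prop: left,bimod associator } together with the antisymmetry $[p,r,x]=-[r,p,x]$ and the trace property $\re(ab)=\re(ba)$, the terms $\re([p,x,f]r)$ and $\re(p[r,x,f])$ cancel, leaving $-\re[r,px,f]$, which vanishes because $f$ is $\O$-para-linear. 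Bilinearity is immediate from the $\R$-linearity of $f$ and of the second associator in its first two arguments.

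The main step is right-alternativity. Expanding both sides,
$$((f\odot r)\odot r)(x)=f(x)r^{2}+[f(x),r,r]-[r,x,f]r-[r,x,f\odot r],$$
and $[f(x),r,r]=0$ by alternativity of $\O$. On the other hand $(f\odot r^{2})(x)=f(x)r^{2}-[r^{2},x,f]$. Since $r^{2}=2(\re r)r-|r|^{2}$ we get $[r^{2},x,f]=2(\re r)[r,x,f]$, and writing $rx=2(\re r)x-\bar r x$ together with the identity $[r,\bar r x,f]=r[r,x,f]$ from \eqref{eq:Omod left bi pAp(xf)=Ap(xf)p} gives $[r,rx,f]=2(\re r)[r,x,f]-r[r,x,f]$. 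Substituting the formula for $[p,x,f\odot r]$ derived above (now with $p=r$), everything boils down to the single identity
$$[r,x,f]\,r+r\,[r,x,f]=2(\re r)[r,x,f],$$
which is precisely what \eqref{eq:Omod left bi pAp(xf)=Ap(xf)p} yields via $r[r,x,f]=[r,x,f]\bar r$ combined with $r+\bar r=2\re r$.

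The principal obstacle is the right-alternative step: the naive expansion produces four associator-type terms of different shapes $([r,x,f\odot r]$, $[r,rx,f]$, $[r,x,f]r$, $r[r,x,f])$ which do not obviously match $[r^{2},x,f]$. The trick that makes it work is the identity $p[p,x,f]=[p,x,f]\bar p$ from Proposition \ref{prop: left,bimod associator }, which plays the role that alternativity plays in $\O$ itself and is exactly what is needed to collapse the computation.
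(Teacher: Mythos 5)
Your proposal is correct, and its first half (closure of $\Hom_{L\O}(M,\O)$ under $\odot$) follows the paper's proof almost verbatim: the same four-term expansion of $[p,x,f\odot r]$ and the same cancellation of $\re([p,x,f]r)$ against $\re(p[r,x,f])$ via Proposition \ref{prop: left,bimod associator }. Where you genuinely diverge is the right-alternative step. The paper does \emph{not} compute $(f\odot r)\odot r-f\odot(r^2)$ in full; it invokes Corollary \ref{cor: A_p(x,f)=0 and f(px)=pf(x)} (a \almost linear map is determined by its real part) to reduce the identity to $\re\big((f\odot r)\odot r-f\odot(r^2)\big)(x)=0$, which then falls out in one line from $[r,x,f]=-[\overline r,x,f]$ and $[\overline r,x,f]r=[\overline r,rx,f]$. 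You instead prove the stronger pointwise octonion-valued identity directly, by expanding $[r^2,x,f]=2(\re r)[r,x,f]$ from the minimal polynomial of $r$, computing $[r,rx,f]$ and $[r,x,f\odot r]$ explicitly, and collapsing everything with $r[r,x,f]=[r,x,f]\overline r$; I have checked that the bookkeeping closes (the difference equals $-2\big([r,x,f]r+r[r,x,f]-2(\re r)[r,x,f]\big)=0$). Your route is more computational but self-contained at that point, and it exhibits the exact mechanism ($p[p,x,f]=[p,x,f]\overline p$ playing the role of alternativity) rather than hiding it behind the real-part reduction; the paper's route is shorter because the closure step already guarantees the difference is \almost linear, so only its real part needs checking. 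One small caution: your phrase ``$\O$ is generated by alternativity from any two elements'' is loose — what is actually needed is that the right-module axiom is, by definition (mirroring \eqref{eq:left ass}), equivalent to $(mr)r=mr^2$ for all $r$, which is exactly the identity you prove.
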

\begin{proof}
	First we need to show that for any
 $f \in \Hom_{L\O}(M,\O)$
and  $r\in \O$ we have $$f\odot r\in \Hom_{L\O}(M,\O).$$
That is to prove 	\begin{eqnarray}\label{eq:re-iden-125}\re [p,x,f\odot r]=0.\end{eqnarray}
By direct calculations, we have %\eqref{eq:algmod A_p(x,fr)=[p,f(x),r]+A_p(x,f)r-A_r(px,f)+pA_r(x,f)}
	\begin{align}
\label{eq:algmod A_p(x,fr)=[p,f(x),r]+A_p(x,f)r-A_r(px,f)+pA_r(x,f)}
	[p,x,f\odot r]
%		&=\left<px,fr\right>-p\fx{x}{fr}\notag\\
&=(f\odot r)(px)-p((f\odot r)(x))\notag\\
%		&=\fx{px}{f}r-A_r(px,f)-p\left(f(x)r-A_r(x,f)\right)\notag\\
	&=f(px)r-[r,px,f]-p\left(f(x)r-[r,x,f]\right)\notag\\
		&=\left(pf(x)+[p,x,f]\right)r-[r,px,f]-p\left(f(x)r-[r,x,f]\right)\notag\\
		&=[p,f(x),r]+[p,x,f]r-[r,px,f]+p[r,x,f].
	\end{align}
Since    $$\re [p,f(x),r]=0,\quad \re [r,px,f]=0,$$
	 \eqref{eq:algmod A_p(x,fr)=[p,f(x),r]+A_p(x,f)r-A_r(px,f)+pA_r(x,f)}  tells us that we have to  show
	$$\re([p,x,f]r)=-\re(p[r,x,f]).$$
	This can be easily deduced  by applying  identity \eqref{eq:Omod left bi Re(Ap(xf)q=-Re(Aq(xf)p))} and the fact  that $$\re (pq)=\re(qp)$$ for all $p,q\in \O$.
	
	Next we come to show  that   $\Hom_{L\O}(M,\O)$ is  a right $\spo$-module. It suffices to prove $$(f\odot r)\odot r=f\odot (r^2)$$
	for any $r\in \O$.
Due to  the uniqueness as shown in
a\asertion{2} of  Corollary \ref{cor: A_p(x,f)=0 and f(px)=pf(x)}, we need to prove
$$\re {\big((f\odot r)\odot r-f\odot (r^2)\big)(x)} =0.$$

By definition \eqref{eqdef:<x,fr>} and identity \eqref{eq:Omod left bi pAp(xf)=Ap(xf)p}, we have
%	\begin{align}
%	\fx{x}{(fr)r}&=\fx{x}{fr}r-A_r(x,fr)\notag\\
%	&=\fx{x}{f}r^2-A_r(x,f)r-A_r(x,fr)\label{eq:<x,fr,r>}
%	\intertext{and }
%	\fx{x}{fr^2}		&=	\fx{x}{f}r^2-A_{r^2}(x,f).\label{eq:<xfr2>}
%	\end{align}
	\begin{align}
{((f\odot r)\odot r)(x)}&={\big((f\odot r)(x)\big)}r-[r,x,f\odot r]\notag
\\
&=f(x)r^2-[r,x,f]r-[r,x,f\odot r]\label{eq:<x,fr,r>}
\intertext{and }
{(f\odot (r^2) )(x)}		&=	f(x)r^2-[{r^2},x,f].\label{eq:<xfr2>}
\end{align}
	Notice that
	 $\re [r,x,f\odot r]=0$, $\re [{r^2},x,f]=0$,
and
\begin{eqnarray}\label{eq:ide-127} [r,x,f]=-[\overline{r},x,f].\end{eqnarray}
	We thus conclude  from    \eqref{eq:<x,fr,r>} and  \eqref{eq:<xfr2>} that
	\begin{align*}
	\re {\big((f\odot r)\odot r-f\odot (r^2)\big)(x)}&
=\re([\overline r,x,f]r)=\re([\overline r,  rx,f])=0.
	\end{align*}
The last but one  step used \eqref{eq:Omod left bi pAp(xf)=Ap(xf)p}.
		This completes the proof.	
\end{proof}

	\section{$\O$-Hilbert spaces}
We adopt the definition of $\O$-Hilbert spaces
by Goldstine and Horwitz \cite{goldstine1964hilbert} except that we describe it in terms of $\O$-\almost linearity.
In addition, we find a non-independent axiom given by  Goldstine and Horwitz \cite{goldstine1964hilbert}.
	\subsection{Definition of  Hilbert $\spo$-modules}

We first introduce  a succinct definition of $\O$-Hilbert spaces.
This definition also  suits    all normed division algebras.
	
\begin{mydef}\label{def:hilbert O space}
	A left $\spo$-module $H$ is called a \textbf{ pre-Hilbert left $\spo$-module} if there exists an $\spr $-bilinear map $\left\langle\cdot,\cdot \right\rangle :H\times H \rightarrow \O$,  referred to as an \textbf{$\spo$-inner product}, satisfying:
	\begin{enumerate}
		\item \textbf{($\O$-\almost linearity)} $\left\langle\cdot,u\right\rangle$ is left $\spo$-\almost linear for all $u\in H$.
		%	\item $\mathit{Re}\left\langle pu ,v\right\rangle=\mathit{Re}\left\langle u ,\overline{p}v\right\rangle$ for any $p\in \spo,\ u,v\in H$.
		\item \textbf{(Octonion hermiticity)} $\left\langle u ,v\right\rangle=\overline{\left\langle v ,u\right\rangle}$ for all $ u,v\in H$.
		\item \textbf{(Positivity)} $\left\langle u ,u\right\rangle\in \spr^+$;  and $\left\langle u ,u\right\rangle=0$ if and only if $u=0$.
	\end{enumerate}
\end{mydef}

Under   axioms in Definition  \ref{def:hilbert O space}, we can deduce in Lemma \ref{lem:the second associator of H 's prop-943} below  that
\begin{equation}\label{eq:def-redu-403}\left\langle pu,  u\right\rangle=p\left\langle u, u\right\rangle
\end{equation}
for all $p\in \O$ and all $u\in H$. This equation  is one of     the axioms  of Goldstine and Horwitz as   part $(f)$ of   \cite[Postulate $2$]{goldstine1964hilbert}.

We introduce   a real trilinear map
$$[\cdot, \cdot, \cdot]: \O\times M\times M\xlongrightarrow[]{} \O,$$
defined by
 $$[p,u,v]:=\left\langle pu ,v\right\rangle-p\left\langle u ,v\right\rangle$$ for $u,v\in H$ and $p\in \O$. We call this map  \textbf{the second associator} in  $H$.

Identity \eqref{eq:def-redu-403}  can be restated as
  	$$[p,u,u]=0.$$

\begin{rem}\label{rem:ass coincide}
The second associator   has been introduced in the space {$\Hom_{L\O}(H,\O)$} in Section \ref{sec:almost linearity}.
These two notions in $H$ and $H^*$ coincide if we identify  $H$  with $H^*$ (see Theorem \ref{thm:Riesz representation theorem}). In fact, for any $p\in\O$ and $u, v\in H$ we have
\begin{eqnarray}\label{eq:ass coincide}
 [p,u,v]=[p,u,v'],\end{eqnarray}
where $v'\in   {\Hom_{L\O}(H,\O) }$ is a map associated to $v$ via the inner product, i.e.,
$$v'=\left\langle \cdot ,v\right\rangle.$$
Indeed, by definition we have
\begin{eqnarray*}
[p,u,v']&=&v'(pu)-pv'(u)
\\
&=& \left\langle pu ,v\right\rangle-p\left\langle u ,v\right\rangle
\\
&=& [p,u,v].
\end{eqnarray*}

\end{rem}

\begin{eg} If  $\O$ is regarded  as an $\O$-Hilbert space with the inner product
$$\fx{x}{y}:=x\overline{y}$$
then its second associator of $p, u, v$ is given by
\begin{eqnarray*} [p, u, v]&=&\fx{pu}{v}-p\fx{u}{v}
\\
&=& (pu)\overline{v}-p(u\overline v),
\end{eqnarray*}
which is exactly the associator of $p, u,   \overline v$ in the algebra $\O$.

\end{eg}

Second associators in an $\O$-Hilbert space can be studied with the aid of
the theory   of $\O$-para-linear functionals
based on relation \eqref{eq:ass coincide}.

\begin{lemma}
	Let $H $ be  a pre-Hilbert (left) $\spo$-module, $p, q\in \O$ and $v,u\in H$. Then we have
	\begin{align}
	%\fx{px}{u}&=&p\fx{x}{u}+A_p(x,u);\\
	[p,v,u]&=\sum_{i=1}^7 e_i \re \fx{[e_i,p,v]}{u};\label{eq:Ap(u,v)=sum ei re (<[e_i,p,u],v>}\\	
[{pq},v,u] &=\fx{[p,q,v]}{u}-[p,q,\fx{v}{u}]+p[q,v,u]+[p,qv,u];	\label{eq:Apq(v,u)=<[p,q,v],u>-[p,q,<v,u>]+pAq(v,u)+Ap(qv,u) }\\	
	\re\fx{[p,q,v]}{u}&=-\re\left(p[q,v,u]\right). \label{eq:Re<[p,q,x],y>=Re(pA(x,y))}
	\end{align}
	
	%		
	%	  Recall Theoreom \ref{Thm:left, bimod Hom(M,M')}, we konw $H^*$ is a right $\spo$-mod. So it make sense to consider $y'r$, where $y\in H,\ r\in \spo$.
	
\end{lemma}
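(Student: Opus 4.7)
My plan is to prove the three identities in turn, making essential use of the fact, recorded in Remark \ref{rem:ass coincide}, that the second associator $[p,v,u]$ in $H$ coincides with the second associator $[p,v,u']$ of the $\O$-\almost linear functional $u'=\fx{\cdot}{u}$. This reduces each claim to a statement about $\O$-\almost linear functions, to which the already-proven results of Section \ref{sec:almost linearity} apply.

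For \eqref{eq:Ap(u,v)=sum ei re (<[e_i,p,u],v>}, I set $f=u'=\fx{\cdot}{u}\in\Hom_{L\O}(H,\O)$, so that $f_\R(x)=\re\fx{x}{u}$. Then Remark \ref{rem:ass coincide} gives $[p,v,u]=[p,v,f]$, and Theorem \ref{thm: f in Hom(M,M')  equivalent-874} expresses the right-hand side as $\sum_{i=1}^{7}e_i f_\R([e_i,p,v])$. Substituting the formula for $f_\R$ immediately yields the desired identity.

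For \eqref{eq:Re<[p,q,x],y>=Re(pA(x,y))}, I apply the same identification $f=\fx{\cdot}{u}$, so $\fx{[p,q,v]}{u}=f([p,q,v])$ and $[q,v,u]=[q,v,f]$. Identity \eqref{eq:Omod left bi Re(f[pqx])=Re(Ap(x,f)q)} of Proposition \ref{prop: left,bimod associator } then gives $\re f([p,q,v])=\re([p,v,f]q)$, and identity \eqref{eq:Omod left bi Re(Ap(xf)q=-Re(Aq(xf)p))} converts this to $-\re([q,v,f]p)$. Since $\re(ab)=\re(ba)$, this is $-\re(p[q,v,u])$ as claimed.

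The main work is in \eqref{eq:Apq(v,u)=<[p,q,v],u>-[p,q,<v,u>]+pAq(v,u)+Ap(qv,u) }, but it is really only bookkeeping. Starting from the definition $[pq,v,u]=\fx{(pq)v}{u}-(pq)\fx{v}{u}$, I rewrite $(pq)v=p(qv)+[p,q,v]$ using the left associator in the $\O$-module $H$, expand $\fx{p(qv)}{u}=p\fx{qv}{u}+[p,qv,u]$ by the definition of the second associator in $H$, and then expand $\fx{qv}{u}=q\fx{v}{u}+[q,v,u]$ similarly. On the other side I decompose $(pq)\fx{v}{u}=p(q\fx{v}{u})+[p,q,\fx{v}{u}]$ using the associator in $\O$. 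All the $p(q\fx{v}{u})$ terms cancel, leaving exactly the four terms on the right-hand side. I anticipate the only mild obstacle being keeping track of which $[\,\cdot\,,\cdot\,,\cdot\,]$ lives in which setting (associator in $\O$, associator in the $\O$-module $H$, or second associator of the inner product), but once the three types are kept straight, the identity is a one-line algebraic manipulation, and no appeal beyond the definitions and the alternativity \eqref{eq:left ass} is required.
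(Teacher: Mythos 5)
Your proposal is correct, and for identities \eqref{eq:Ap(u,v)=sum ei re (<[e_i,p,u],v>} and \eqref{eq:Apq(v,u)=<[p,q,v],u>-[p,q,<v,u>]+pAq(v,u)+Ap(qv,u) } it is essentially the paper's argument: the first is exactly the reduction via $f=u'$ to Theorem \ref{thm: f in Hom(M,M')  equivalent-874}, and the second is the direct definition-chasing expansion that the paper only sketches by saying it is ``similar to the proof of \eqref{eq:intrdct ass id}'' (your cancellation of the $p(q\fx{v}{u})$ terms is the correct bookkeeping; note that even the alternativity \eqref{eq:left ass} you mention is not actually needed, only $\R$-bilinearity and the three definitions of associator). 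The one place you diverge is \eqref{eq:Re<[p,q,x],y>=Re(pA(x,y))}: the paper obtains it by taking real parts of \eqref{eq:Apq(v,u)=<[p,q,v],u>-[p,q,<v,u>]+pAq(v,u)+Ap(qv,u) }, using that $\re[pq,v,u]=\re[p,qv,u]=0$ (para-linearity) and $\re[p,q,\fx{v}{u}]=0$ (associators in $\O$ are purely imaginary), whereas you instead pull it back through $f=u'$ to identities \eqref{eq:Omod left bi Re(f[pqx])=Re(Ap(x,f)q)} and \eqref{eq:Omod left bi Re(Ap(xf)q=-Re(Aq(xf)p))} of Proposition \ref{prop: left,bimod associator }. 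Both are valid; your route has the minor advantage of making \eqref{eq:Re<[p,q,x],y>=Re(pA(x,y))} independent of the second identity, at the cost of invoking an extra proposition rather than the elementary facts about real parts.
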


\begin{proof} We fix $u\in H$ and  take  $f:=u'\in {\Hom_{L\O}(H,\O)}$ in
Theorem
\ref{thm: f in Hom(M,M')  equivalent-874}.
 Since  $$f(x)=u'(x)=\langle x, u\rangle$$
and
$$f_{\mathbb R}(x)=\re \langle x, u\rangle,$$
we thus conclude that
 identity \eqref{eq:Ap(u,v)=sum ei re (<[e_i,p,u],v>} follows from
Theorem
\ref{thm: f in Hom(M,M')  equivalent-874}.

	Identity \eqref{eq:Apq(v,u)=<[p,q,v],u>-[p,q,<v,u>]+pAq(v,u)+Ap(qv,u) } can be proved in the manner similar to the proof of identity \eqref{eq:intrdct ass id}. Identity \eqref{eq:Re<[p,q,x],y>=Re(pA(x,y))}  follows from \eqref{eq:Apq(v,u)=<[p,q,v],u>-[p,q,<v,u>]+pAq(v,u)+Ap(qv,u) } by taking the real part on both sides.
\end{proof}

If $H $ is a pre-Hilbert (left) $\spo$-module, then $(H,\left\langle \cdot,\cdot\right\rangle _{\spr})$ is a real Hilbert space,
where  $$\left\langle x,y\right\rangle _{\spr}:=\re\left\langle x,y\right\rangle.$$
Conversely, it is quite natural to  ask if  a real Hilbert space with a structure of  left $\O$-module  admits an $\O$-inner product.
\begin{thm}\label{thm:Hilbert R O}
	Let $H$ be a left $\O$-module equipped with a real valued inner product $\fx{\cdot}{\cdot}_0$. Then $H$ admits an $\O$-inner product $\fx{\cdot}{\cdot}$ such that
\begin{equation}\label{eq:real i.p. benzhi-942}\fx{\cdot}{\cdot}_\R=\fx{\cdot}{\cdot}_0\end{equation}
	if and only if
	\begin{equation}\label{eq:real i.p. benzhi}
	\left\langle px,y\right\rangle _0=\left\langle x,\overline{p}y\right\rangle _0
	\end{equation}
	holds
	for all $p\in \O$ and all $x,y\in H$.
	
\end{thm}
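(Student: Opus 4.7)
The proof is an ``if and only if'', and I would treat the two directions separately.

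For the necessity, assume $\langle\cdot,\cdot\rangle$ is an $\O$-inner product with $\re\langle\cdot,\cdot\rangle=\langle\cdot,\cdot\rangle_0$. I would combine octonion hermiticity with the $\O$-para-linearity in the first slot (which gives $\re\fx{px}{y}=\re(p\fx{x}{y})$) to obtain \eqref{eq:real i.p. benzhi}. Concretely, $\fx{px}{y}_0=\re(p\fx{x}{y})$, while hermiticity yields $\fx{x}{\overline p y}_0=\re\fx{\overline p y}{x}=\re(\overline p\overline{\fx{x}{y}})=\re(\overline{\fx{x}{y}p})=\re(\fx{x}{y}p)$; since $\re(ab)=\re(ba)$ in $\O$, the two sides agree.

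For the sufficiency, the strategy is to write down the unique $\O$-para-linear extension of $\fx{\cdot}{y}_0$ suggested by Theorem \ref{thm: f in Hom(M,M')  equivalent:} and then verify the three axioms of an $\O$-inner product. I would set
$$\fx{x}{y}:=\sum_{i=0}^{7} e_i\,\fx{\overline{e_i}x}{y}_0.$$
Real bilinearity, the identity $\re\fx{x}{y}=\fx{x}{y}_0$, and the $\O$-para-linearity of $\fx{\cdot}{y}$ are then formal consequences of Theorem \ref{thm: f in Hom(M,M')  equivalent:} and do not require \eqref{eq:real i.p. benzhi}.

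The substantive content lies in verifying Octonion hermiticity and Positivity, which is where \eqref{eq:real i.p. benzhi} is used. For hermiticity, after conjugating inside the defining sum and using $\overline{e_i}=-e_i$ for $i\geq1$, the identity $\fx{x}{y}=\overline{\fx{y}{x}}$ reduces term-by-term to $\fx{e_iy}{x}_0=-\fx{e_ix}{y}_0$, which follows from \eqref{eq:real i.p. benzhi} with $p=e_i$ combined with symmetry of $\fx{\cdot}{\cdot}_0$. For positivity, setting $y=x$ and applying the same combination forces $\fx{e_ix}{x}_0=-\fx{e_ix}{x}_0$, hence $\fx{e_ix}{x}_0=0$ for $i\geq1$; the imaginary coefficients all vanish and $\fx{x}{x}=\fx{x}{x}_0\in\spr^+$, with equality to $0$ exactly when $x=0$.

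The main obstacle I anticipate is purely bookkeeping: keeping track of signs introduced by conjugation $\overline{e_i}=-e_i$ and reindexing sums in the hermiticity check. There is no genuine nonassociative difficulty here, since at every step only a single octonion $e_i$ is multiplied from the left, so no associator corrections appear and Theorem \ref{thm: f in Hom(M,M')  equivalent:} supplies the rest of the structure for free.
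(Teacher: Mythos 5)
Your proposal is correct and follows essentially the same route as the paper: the same hermiticity-plus-para-linearity computation for necessity, the identical candidate inner product $\sum_{i=0}^{7} e_i\fx{\overline{e_i}x}{y}_0=\fx{x}{y}_0-\sum_{i=1}^7\fx{e_ix}{y}_0e_i$ for sufficiency, para-linearity via Theorem \ref{thm: f in Hom(M,M')  equivalent:}, and the same term-by-term checks of hermiticity and positivity using \eqref{eq:real i.p. benzhi}. No substantive differences.
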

\begin{proof}
		Suppose the  $\O$-inner product $\fx{\cdot}{\cdot}$  in $H$ satisfies \eqref{eq:real i.p. benzhi-942}.
To prove  \eqref{eq:real i.p. benzhi}, we need to show
$$\re \fx{px}{y}=\re \left\langle x,\overline{p}y\right\rangle$$
for all $p\in \O$ and $x,y\in H$.
By direct calculation, we have
  	\begin{eqnarray*}\label{eqpf:Hilbert O=R 0}
	\re \fx{px}{y}&=&\re (p\fx{x}{y}+[p,x,y])
\\ &=&\re (p\fx{x}{y})
\\
  &=&  \re (\overline{p\fx{x}{y}})
  \\ &=& \re ({\fx{y}{x}}\overline{p})
  \\ &=& \re (\overline{p}{\fx{y}{x}})
	\\
&=&\re {\fx{\overline{p}y}{x}}
\\
&=&\re \left\langle x,\overline{p}y\right\rangle.
	\end{eqnarray*}

	Conversely,
	suppose that \eqref{eq:real i.p. benzhi} holds.
We claim that the following inner product satisfies all conditions:
	\begin{eqnarray}\label{eqpf:<>O def}
	\left\langle x,y \right\rangle_{\spo} :=\left\langle x,y\right\rangle _0-\sum_{i=1}^7 \left\langle e_ix,y\right\rangle _0e_i.
	\end{eqnarray}
According to the construction,  equation \eqref{eq:real i.p. benzhi-942} obviously holds.
	It remains to show that
$\left\langle \cdot,\cdot\right\rangle _\O$ is an $\spo$-inner product.
	
Evidently, $\left\langle \cdot,\cdot\right\rangle_{\spo} $ is $\spr$-bilinear. We come to prove the \almost linearity.
	Define\begin{equation}\label{eq:inner-prod-048}f(x):=\fx{x}{y}_\O\end{equation} for any given $y\in H$. Then $f$ is clearly a real linear map. We write  \begin{equation}\label{eq:inner-prod-058}f(x)=f_{\R}(x)+\sum_{i=1}^7 f_i(x)e_i,\end{equation}  where $f_\R(x),\, f_i(x)\in \R$.
%This together  another expression  of $f$ which comes from
Combining   \eqref{eqpf:<>O def}  with  \eqref{eq:inner-prod-048}, we obtain another expression  of $f$ which demonstrates that
 $$f_i(x)=-\left\langle e_ix,y\right\rangle _0
 =-\re \left\langle e_ix,y\right\rangle=-\re f(e_ix)=-f_{\mathbb R}(e_ix)$$ for $i=1,\dots,7$.  Then  Theorem \ref{thm: f in Hom(M,M')  equivalent:} implies that $f$ is left \almost linear. This proves the \almost linearity of $\left\langle \cdot,\cdot\right\rangle_{\spo} $.
	
	The octonionic hermiticity follows from  condition \eqref{eq:real i.p. benzhi}. Indeed,
	\begin{align*}
	\overline{{\fx{x}{y}}_\O}&=\left\langle x,y\right\rangle _0+\sum_{i=1}^7 \left\langle e_ix,y\right\rangle _0e_i\\
	&=\left\langle y,x\right\rangle _0+\sum_{i=1}^7 \left\langle x,\overline{e_i}y\right\rangle _0e_i\\
	&=\left\langle y,x\right\rangle _0-\sum_{i=1}^7 \left\langle {e_i}y ,x\right\rangle _0e_i\\
	&=\fx{y}{x}_\O.
	\end{align*}
	
Finally, we show the positivity of $\left\langle \cdot,\cdot\right\rangle_{\spo} $. For each $i=1,\dots, 7$, we have	$$\fx{e_ix}{x}_0=\fx{x}{\overline{e_i}x}_0=-\fx{x}{e_ix}_0=-\fx{e_ix}{x}_0$$
	so that $\fx{e_ix}{x}_0=0$. This means  $$\fx{x}{x}_{\spo}=\fx{x}{x}_0,$$ which  implies the positivity.	
\end{proof}

The  associator $[p,q,u]$  in the $\O$-Hilbert space $H$
  is
  an   skew-adjoint  map as a function of    $u$,
  while the second associator $[p,u,v]$  in the $\O$-Hilbert space $H$
  is an  alternating real bilinear map   of $u$ and $v$.

\begin{lemma}\label{lem:the second associator of H 's prop-943}
	Let $H$ be a pre-Hilbert left $\spo$-module. For any $u,v\in H$ and  $p,q\in \spo$, we have
	\begin{eqnarray}
	\left\langle [p,q,u],v\right\rangle _\R&=&-\left\langle u,[p,q,v]\right\rangle _\R; \label{eq:<[qpx]y>0=<[qpy]x>}\\			
{	[p,u,v]} &=&-[p,v,u].\label{eq:Ap(u,v)=-Ap(v,u)}
	\end{eqnarray}
\end{lemma}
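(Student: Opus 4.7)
The plan is to reduce both statements to a skew-adjointness property of the left associator with respect to the real inner product $\langle\cdot,\cdot\rangle_{\mathbb R}:=\re\langle\cdot,\cdot\rangle$. The key preliminary is to rephrase the $\O$-para-linearity of $\langle\cdot,u\rangle$ as an adjoint relation. Combining $\O$-para-linearity (which gives $\re\langle pu,v\rangle=\re(p\langle u,v\rangle)$), $\O$-hermiticity, and the cyclic identity $\re(xy)=\re(yx)$ on $\O$, a short calculation yields
$$\re\langle pu,v\rangle=\re\langle u,\overline p\,v\rangle\qquad\text{for all }p\in\O,\ u,v\in H.$$
In the language of the left-multiplication operators $L_p(u):=pu$, this says $(L_p)^{*}=L_{\overline p}$ on the real Hilbert space $(H,\langle\cdot,\cdot\rangle_{\mathbb R})$.

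For the first identity \eqref{eq:<[qpx]y>0=<[qpy]x>} I would exploit this adjoint formula directly. Writing the left associator as $[p,q,\cdot]=L_{pq}-L_pL_q$ and using $(L_pL_q)^{*}=L_q^{*}L_p^{*}$, one obtains
$$\bigl([p,q,\cdot]\bigr)^{*}=L_{\overline{pq}}-L_{\overline q}L_{\overline p}=L_{\overline q\,\overline p}-L_{\overline q}L_{\overline p}=[\overline q,\overline p,\cdot].$$
Since the associator in a left $\O$-module is $\mathbb R$-trilinear and vanishes as soon as any of its arguments lies in $\mathbb R\cdot 1$, expanding $\overline p=2\re(p)-p$ and $\overline q=2\re(q)-q$ collapses $[\overline q,\overline p,\cdot]$ to $[q,p,\cdot]$, and left-alternativity \eqref{eq:left ass} then rewrites this as $-[p,q,\cdot]$. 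Thus $[p,q,\cdot]$ is skew-adjoint, which is precisely \eqref{eq:<[qpx]y>0=<[qpy]x>}.

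The second identity \eqref{eq:Ap(u,v)=-Ap(v,u)} is then obtained by feeding \eqref{eq:<[qpx]y>0=<[qpy]x>} into the identity \eqref{eq:Re<[p,q,x],y>=Re(pA(x,y))} of the preceding lemma. Swapping $u$ and $v$ in \eqref{eq:Re<[p,q,x],y>=Re(pA(x,y))} gives $\re\langle[p,q,u],v\rangle=-\re(p[q,u,v])$; the first identity together with hermiticity rewrites the left-hand side as $-\re\langle u,[p,q,v]\rangle=-\re\langle[p,q,v],u\rangle$, and a second use of \eqref{eq:Re<[p,q,x],y>=Re(pA(x,y))} expresses this last quantity as $\re(p[q,v,u])$. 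Comparing, we get $\re\bigl(p([q,u,v]+[q,v,u])\bigr)=0$ for every $p\in\O$; since the vanishing of $\re(p\xi)$ for all $p\in\O$ forces $\xi=0$, we conclude $[p,u,v]=-[p,v,u]$.

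The main obstacle lies entirely in the first identity. Once the adjoint formula $(L_p)^{*}=L_{\overline p}$ is in hand, both statements follow from the $\mathbb R$-trilinearity of the associator together with the already-established identity \eqref{eq:Re<[p,q,x],y>=Re(pA(x,y))}. This adjointness is exactly the reformulation of the inner-product axioms that makes the Goldstine--Horwitz axiom $(f)$ redundant, and it is what drives the entire argument.
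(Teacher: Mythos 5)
Your proposal is correct and matches the paper's argument: identity \eqref{eq:<[qpx]y>0=<[qpy]x>} is proved exactly as in the paper, via the adjoint relation $\langle pu,v\rangle_\R=\langle u,\overline{p}v\rangle_\R$ of Theorem \ref{thm:Hilbert R O}, the fact that real parts drop out of associators, and left-alternativity. For \eqref{eq:Ap(u,v)=-Ap(v,u)} the paper substitutes \eqref{eq:<[qpx]y>0=<[qpy]x>} directly into the expansion \eqref{eq:Ap(u,v)=sum ei re (<[e_i,p,u],v>}, whereas you apply \eqref{eq:Re<[p,q,x],y>=Re(pA(x,y))} twice and invoke the non-degeneracy of the trace form $\re(p\,\cdot)$ on $\O$ --- a harmless variant built from the same ingredients of the preceding lemma.
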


\begin{proof}
	Identity  \eqref{eq:<[qpx]y>0=<[qpy]x>}
	 follows from Theorem \ref{thm:Hilbert R O}. Indeed,
	\begin{eqnarray*}
		\left\langle [p,q,u],v\right\rangle _\R&=&\left\langle (pq)u-p(qu),v\right\rangle _\R\\
		&=&\big\langle u,\overline{(pq)}v-\overline{q}(\overline{p}v)\big\rangle _\R\\
		&=&\left\langle u,[\overline{q},\overline{p},v]\right\rangle _\R\\
&=&\left\langle u,[q,p,v]\right\rangle _\R.		
\\
&=&-\left\langle u,[p,q,v]\right\rangle _\R.
	\end{eqnarray*}

	Due to  identities \eqref{eq:Ap(u,v)=sum ei re (<[e_i,p,u],v>} and \eqref{eq:<[qpx]y>0=<[qpy]x>} we obtain
	\begin{eqnarray*}
	[p,u,v]&=&\sum_{i=1}^7 e_i\fx{[e_i,p,u]}{v}_\R\\
		&=&-\sum_{i=1}^7 e_i\fx{[e_i,p,v]}{u}_\R\\
		&=&-[p,v,u].
	\end{eqnarray*}
	This proves \eqref{eq:Ap(u,v)=-Ap(v,u)}.
	
	\end{proof}

As a direct consequence of Lemma
\ref{lem:the second associator of H 's prop-943}, we have for any $p\in \O$ and $u\in H$
$$[p, u, u]=0.$$

The next lemma {tells} us  how it works when factors of octonions pass through the $\O$-inner product.

\begin{lemma}\label{lem:the second associator of H 's prop}
	Let $H$ be a pre-Hilbert left $\spo$-module. For any $u,v\in H$ and  $p,q\in \spo$, the following identities hold:
	\begin{eqnarray}	\fx{u}{pv}&=&\fx{u}{v}\overline{p}+[p,u,v];\label{eq:<u,pv>=<u,v>p^+Ap(uv)}\\
	\fx{pu}{qv}&=&p(\fx{u}{v}\overline{q})+p[q,u,v]+[p,u,qv];\label{eq:<pu,qv>=p(<u,v>q^- +pAq(u,v)+Ap(u,qv))}\\
	\fx{pu}{qv}&=&(p\fx{u}{v})\overline{q}+[pq,u,v]+\fx{[p,q,v]}{u}.\label{eq:<pu,pv>in lemma}
	\end{eqnarray}
\end{lemma}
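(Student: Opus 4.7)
The plan is to prove the three identities in sequence, each reducing to a short manipulation of the second associator combined with hermiticity, and (for the last) the five-term identity from the preceding lemma.

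For \eqref{eq:<u,pv>=<u,v>p^+Ap(uv)}, I would start from the hermiticity axiom $\fx{u}{pv}=\overline{\fx{pv}{u}}$, expand $\fx{pv}{u}=p\fx{v}{u}+[p,v,u]$ by the $\O$-para-linearity of $\fx{\cdot}{u}$, and take the conjugate. The term $\overline{p\fx{v}{u}}$ becomes $\overline{\fx{v}{u}}\,\overline{p}=\fx{u}{v}\overline{p}$, while $\overline{[p,v,u]}=[p,u,v]$: here $\re[p,v,u]=0$ (because $\fx{\cdot}{u}$ is $\O$-para-linear), so $\overline{[p,v,u]}=-[p,v,u]$, and then \eqref{eq:Ap(u,v)=-Ap(v,u)} gives $-[p,v,u]=[p,u,v]$.

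For \eqref{eq:<pu,qv>=p(<u,v>q^- +pAq(u,v)+Ap(u,qv))}, I would expand $\fx{pu}{qv}=p\fx{u}{qv}+[p,u,qv]$ by para-linearity in the first slot, substitute $\fx{u}{qv}=\fx{u}{v}\overline{q}+[q,u,v]$ from \eqref{eq:<u,pv>=<u,v>p^+Ap(uv)}, and collect terms using $\R$-bilinearity.

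Identity \eqref{eq:<pu,pv>in lemma} is the main obstacle. My strategy is to derive it from \eqref{eq:<pu,qv>=p(<u,v>q^- +pAq(u,v)+Ap(u,qv))} by checking that the two right-hand sides coincide. Their difference contains the associator-in-$\O$ term $p(\fx{u}{v}\overline{q})-(p\fx{u}{v})\overline{q}=-[p,\fx{u}{v},\overline{q}]$, so one is reduced to the scalar identity
\[
[p,\fx{u}{v},q]+p[q,u,v]+[p,u,qv]=[pq,u,v]+\fx{[p,q,v]}{u}.
\]
To verify it, I would apply the five-term identity \eqref{eq:Apq(v,u)=<[p,q,v],u>-[p,q,<v,u>]+pAq(v,u)+Ap(qv,u) } and use the skew-symmetry \eqref{eq:Ap(u,v)=-Ap(v,u)} to convert $[pq,v,u]$ into $-[pq,u,v]$ (and similarly $p[q,v,u]=-p[q,u,v]$, $[p,qv,u]=-[p,u,qv]$). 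What then remains is the purely octonionic identity $[p,\fx{u}{v},q]=[p,q,\fx{v}{u}]$. This I would deduce from the alternative-algebra facts that the associator in $\O$ is totally antisymmetric and that $[p,q,\overline{a}]=-[p,q,a]$ (because the associator with a real scalar vanishes), combined with hermiticity $\fx{v}{u}=\overline{\fx{u}{v}}$. The trickiest step is the bookkeeping of skew-symmetries, but once one observes that $\fx{u}{v}+\fx{v}{u}=2\re\fx{u}{v}$ is real and hence passes through every associator as zero, the computation collapses to the claimed equality.
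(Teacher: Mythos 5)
Your proposal is correct and follows essentially the same route as the paper: hermiticity plus the skew-symmetry \eqref{eq:Ap(u,v)=-Ap(v,u)} for the first identity, para-linearity in the first slot for the second, and the five-term identity \eqref{eq:Apq(v,u)=<[p,q,v],u>-[p,q,<v,u>]+pAq(v,u)+Ap(qv,u) } combined with the conjugation/antisymmetry properties of the octonionic associator for the third. The only difference is presentational (you verify that the two right-hand sides agree, while the paper substitutes the rearranged five-term identity directly), and your reduction to $[p,\fx{u}{v},q]=[p,q,\fx{v}{u}]$ is exactly the step the paper handles by writing $-[p,q,\fx{v}{u}]=[p,\fx{u}{v},\overline{q}]$.
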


\begin{proof}
	 	We first  prove \eqref{eq:<u,pv>=<u,v>p^+Ap(uv)}. By the octonionic hermiticity,  we conclude from \eqref{eq:Ap(u,v)=-Ap(v,u)} that
	\begin{eqnarray*}
		\fx{u}{pv}&=&\overline{\fx{pv}{u}}\\
		&=&	\overline{p\fx{v}{u}+[p,v,u]}\\
		&=&\fx{u}{v}\overline{p}-{[p,v,u]}\\
		&=&\fx{u}{v}\overline{p}+[p,u,v].
	\end{eqnarray*}		
{
		 \eqref{eq:<pu,qv>=p(<u,v>q^- +pAq(u,v)+Ap(u,qv))}
	can be deduced   from \eqref{eq:<u,pv>=<u,v>p^+Ap(uv)} directly. Indeed,
	\begin{eqnarray*}
		\fx{pu}{qv}&=&p\fx{u}{qv}+[p,u,qv]\notag\\
		&=&p(\fx{u}{v}\overline{q})+p{[q,u,v]}+[p,u,qv].%\label{eq:<pu,qv>=p(<u,v>q^- +pAq(u,v)+Ap(u,qv))}
	\end{eqnarray*}
	It follows from identities  \eqref{eq:Apq(v,u)=<[p,q,v],u>-[p,q,<v,u>]+pAq(v,u)+Ap(qv,u) } and \eqref{eq:Ap(u,v)=-Ap(v,u)} that
		$$p[q,u,v]+[p,u,qv]=\fx{[p,q,v]}{u}-[p,q,\fx{v}{u}]+[pq,u,v].$$
	Combining this with  \eqref{eq:<pu,qv>=p(<u,v>q^- +pAq(u,v)+Ap(u,qv))}, we get
	\begin{eqnarray*}
		\fx{pu}{qv}&=&p(\fx{u}{v}\overline{q})+\fx{[p,q,v]}{u}-[p,q,\fx{v}{u}]+[pq,u,v]\\
		&=&p(\fx{u}{v}\overline{q})+[p,\fx{u}{v},\overline{q}]+\fx{[p,q,v]}{u}+[pq,u,v]\\
		&=&(p\fx{u}{v})\overline{q}+[pq,u,v]+\fx{[p,q,v]}{u}.
	\end{eqnarray*}
}	
This  completes the proof.
\end{proof}

\begin{rem}
	 All   discussions in this subsection  still hold when   $\O$ is replaced by an arbitrary  real normed division algebra $\mathbb{K}$.
	This verifies    that the Hilbert space over any normed division algebras can be defined  uniformly.
 \end{rem}

\subsection{Norm of $\O$-Hilbert space}
In this subsection, we collect some known results about    the norm of an $\O$-Hilbert space; see \cite{goldstine1964hilbert}.
Our method is suitable for
  any Hilbert space over any normed division algebras.

\begin{mydef}%[\cite{huoqinghai2020nonass}]
	A normed left $\spo$-module is a pair $(X,\|\cdot\|)$ consisting of a left  $\spo$-module $M$ and a map
	$\|\cdot\|:\  X\rightarrow  \spr$
		satisfying the following axioms.
	\begin{enumerate}
		\item $\fsh{x} \geqslant 0 $ for all $x\in X$ with equality if and only if $x = 0$.
		\item $\fsh{p x}=|p|\fsh{x}$ for all $p\in \spo$ and $x\in X$.
		\item $\fsh{x+y}\leqslant\fsh{x}+\fsh{y}$ for all $x,y\in X$.
	\end{enumerate}

A normed $\O$-module  is said to be a Banach $\O$-module if it is complete with respect to its canonical   norm.
\end{mydef}

\begin{mydef} An \textbf{$\spo$-Hilbert space}
	is a  pre-Hilbert left $\spo$-module	$H$ which is complete with respect to  the norm
	$\fsh{\cdot}:H\rightarrow \mathbb{R}^+ $ induced by the $\O$-inner product, i.e.,
	\begin{eqnarray}\label{def:fshu ||u||}
	\fsh{u}=\sqrt{\fx{u}{u}}.
	\end{eqnarray}
	
\end{mydef}

It is easy to check that in an   $\spo$-Hilbert space there holds  the   polarization identity:
\begin{eqnarray}
\fx{u}{v}=\frac{1}{4} \sum_{i=0}^7e_i \left(\fsh{e_iu+v}^2-\fsh{e_iu-v}^2\right).
\end{eqnarray}
This means that the $\O$-inner-product can be recovered from the norm.

\begin{thm}
	Let $H$ be a Hilbert left $\spo$-module.
 Then 	$H$ is a Banach left $\O$-module with respect to the norm  induced from the $\O$-inner product.
 \end{thm}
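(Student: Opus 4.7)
The plan is to verify the three norm axioms and then the compatibility with octonion scalar multiplication, since completeness is already built into the definition of the $\spo$-Hilbert space.

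First, positivity $\fsh{u}\ge 0$ (with equality iff $u=0$) is immediate from the Positivity axiom of the $\O$-inner product, since $\fx{u}{u}\in\spr^+$. The triangle inequality $\fsh{u+v}\le\fsh{u}+\fsh{v}$ follows from the fact that $(H,\re\fx{\cdot}{\cdot})$ is a real Hilbert space, because $\fx{u}{u}=\re\fx{u}{u}$ and hence $\fsh{u}$ coincides with the real Hilbert space norm; Cauchy--Schwarz in the real inner product then yields the standard argument. Completeness of $H$ with respect to $\fsh{\cdot}$ is then immediate from the same identification, since the real Hilbert space structure is complete by hypothesis.

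The key remaining task is to prove the homogeneity axiom $\fsh{pu}=\abs{p}\fsh{u}$ for $p\in\spo$ and $u\in H$. For this I apply identity \eqref{eq:<pu,pv>in lemma} from Lemma \ref{lem:the second associator of H 's prop} with $q=p$ and $v=u$, which gives
\begin{equation*}
\fx{pu}{pu}=(p\fx{u}{u})\overline{p}+[p^2,u,u]+\fx{[p,p,u]}{u}.
\end{equation*}
Two of these terms vanish: by left alternativity \eqref{eq:left ass}, $[p,p,u]=0$, and by the skew symmetry \eqref{eq:Ap(u,v)=-Ap(v,u)} of the second associator, $[p^2,u,u]=0$. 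Since $\fx{u}{u}\in\spr$ and $p\overline{p}=\abs{p}^2$ in $\spo$, the remaining term collapses to $\abs{p}^2\fx{u}{u}$, so
\begin{equation*}
\fsh{pu}^2=\fx{pu}{pu}=\abs{p}^2\fx{u}{u}=\abs{p}^2\fsh{u}^2,
\end{equation*}
whence $\fsh{pu}=\abs{p}\fsh{u}$.

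The main (and essentially only) subtlety is the homogeneity step, because naively one might worry that the nonassociativity of $\spo$ would obstruct the identity $\fx{pu}{pu}=\abs{p}^2\fx{u}{u}$. The resolution is precisely the alternating property of the second associator combined with left alternativity; both vanishings are crucial, and they are exactly what makes \eqref{eq:<pu,pv>in lemma} degenerate to the expected scalar identity when $u=v$ and $p=q$. Once this is in hand, the four required properties are assembled and the theorem follows.
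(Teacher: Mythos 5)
Your proof is correct and follows essentially the same route as the paper: the paper likewise reduces the statement to the homogeneity identity $\fsh{pu}=\abs{p}\fsh{u}$ (taking the real Banach space structure for granted) and derives it from \eqref{eq:<pu,pv>in lemma} together with the alternating property \eqref{eq:Ap(u,v)=-Ap(v,u)}. Your write-up simply makes explicit the two vanishing terms ($[p,p,u]=0$ by left alternativity and $[pp,u,u]=0$ by skew symmetry) that the paper leaves implicit.
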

\begin{proof}	
		Since  $(H,\fsh{\cdot})$ is a real Banach space, it remains to  show that  $$\fsh{pu}=\abs{p}\fsh{u}$$
for any $p\in \spo$ and $x\in X$.
According to  \eqref{eq:Ap(u,v)=-Ap(v,u)} and  \eqref{eq:<pu,pv>in lemma} we obtain
	\begin{align*}
	\fx{pu}{pu}&=(p\fx{u}{u})\overline{p}+[{pp},u,u]+\fx{[p,p,u]}{u}=|p|^2\fsh{u}^2.
	\end{align*}
	This  proves that $H$ is a Banach left $\O$-module.
 	\end{proof}

The Cauchy-Schwarz inequality has been established  in \cite{goldstine1964hilbert}.  Here we give an alternative   proof
based on Lemma \ref{lem:the second associator of H 's prop}.
\begin{prop}
	For all $u,v\in H$,  we have  the Cauchy-Schwarz inequality
\begin{eqnarray}\label{eq:Cauchy-Schwarz inequality}
\abs{\fx{u}{v}}\leqslant\fsh{u}\fsh{v}.
\end{eqnarray}
Equality holds in \eqref{eq:Cauchy-Schwarz inequality}  if and only if $u$ and $v$ are $\O$-linearly dependent.

\end{prop}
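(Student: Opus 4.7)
The plan is to reduce the octonionic Cauchy--Schwarz to the real Cauchy--Schwarz on $(H,\re\langle\cdot,\cdot\rangle)$, which is a genuine real Hilbert space by definition, rather than attempt to mimic the classical expansion of $\|u-\lambda v\|^{2}$ (which becomes treacherous in the nonassociative setting because one cannot freely move scalars through the inner product).

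First, dispense with the trivial case $\langle u,v\rangle=0$. Otherwise, introduce the unit octonion
$$w:=\frac{\langle u,v\rangle}{|\langle u,v\rangle|}\in\O,\qquad|w|=1,$$
so that $\langle u,v\rangle\,\overline{w}=|\langle u,v\rangle|$. Next, I will use identity \eqref{eq:<u,pv>=<u,v>p^+Ap(uv)} with $p=w$ to move $w$ inside the inner product:
$$\langle u,wv\rangle=\langle u,v\rangle\,\overline{w}+[w,u,v].$$
Taking real parts, the second associator disappears: by \eqref{eq:Ap(u,v)=-Ap(v,u)} we have $[w,u,v]=-[w,v,u]$, and $\re[w,v,u]=0$ because $\langle\cdot,u\rangle$ is $\O$-para-linear. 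Hence
$$\re\langle u,wv\rangle=\re\bigl(\langle u,v\rangle\,\overline{w}\bigr)=|\langle u,v\rangle|.$$

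Now apply the classical real Cauchy--Schwarz inequality to the real inner product $\langle\cdot,\cdot\rangle_{\R}=\re\langle\cdot,\cdot\rangle$:
$$|\langle u,v\rangle|=\re\langle u,wv\rangle\leq\|u\|\,\|wv\|=\|u\|\,\|v\|,$$
where the last equality uses the already-established norm identity $\|wv\|=|w|\|v\|=\|v\|$. This gives \eqref{eq:Cauchy-Schwarz inequality}.

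For the equality case: equality in real Cauchy--Schwarz forces $u$ and $wv$ to be proportional over $\R$, i.e.\ $u=t(wv)=(tw)v$ with $t\in\R$, which is precisely $\O$-linear dependence of $u$ and $v$; conversely, if $u=\lambda v$ for some $\lambda\in\O$ then a direct computation using \eqref{eq:<pu,pv>in lemma} together with $[p,p,v]=0$ yields $|\langle u,v\rangle|=|\lambda|\,\|v\|^{2}=\|u\|\|v\|$. The only place any real obstacle could hide is the vanishing of $\re[w,u,v]$; this is handled cleanly by the alternating property \eqref{eq:Ap(u,v)=-Ap(v,u)} combined with $\O$-para-linearity in the first slot, so no further associator gymnastics are needed.
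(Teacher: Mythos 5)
Your proof is correct, and it reaches the inequality by a slightly different route than the paper: you normalize the ``phase'' of $\fx{u}{v}$ by a unit octonion $w$, use \eqref{eq:<u,pv>=<u,v>p^+Ap(uv)} together with $\re[w,u,v]=0$ to get $\re\fx{u}{wv}=\abs{\fx{u}{v}}$, and then invoke the real Cauchy--Schwarz inequality for $\re\fx{\cdot}{\cdot}$ as a black box. The paper instead sets $p=\fx{u}{v}$ (with $\fsh{v}=1$), checks $\fx{u-pv}{pv}_{\R}=0$ by the same identity \eqref{eq:<u,pv>=<u,v>p^+Ap(uv)}, and applies the Pythagorean theorem $\fsh{u}^2=\fsh{u-pv}^2+\fsh{pv}^2\geqslant\abs{p}^2$. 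These are the same argument at different levels of encapsulation --- your vector $wv$ is, up to the real factor $\abs{p}$, exactly the paper's $pv$, and the real Cauchy--Schwarz you cite is proved by precisely the projection step the paper writes out. What your version buys is modularity (the only nonassociative input is the single computation $\re\fx{u}{wv}=\abs{\fx{u}{v}}$, after which everything is classical); what the paper's version buys is that the equality case falls out immediately as $\fsh{u-pv}=0$, i.e.\ $u=pv$, with no discussion of signs. Two cosmetic remarks: $\re[w,u,v]=0$ follows directly from the para-linearity of $\fx{\cdot}{v}$, so the detour through the alternating identity \eqref{eq:Ap(u,v)=-Ap(v,u)} is not needed; and in the equality discussion you should note that real Cauchy--Schwarz equality with the \emph{positive} sign (which is what you have, since $\re\fx{u}{wv}=\abs{\fx{u}{v}}\geqslant 0$) gives $u=t(wv)$ with $t\geqslant 0$, and that the dispensed case $\fx{u}{v}=0$ with equality forces $u=0$ or $v=0$. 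Neither point is a gap.
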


\begin{proof}	
		To prove the Cauchy-Schwarz inequality,
	without loss of generality  we may assume $\fsh{v}=1$. Denote $p=\fx{u}{v}$. It follows from  \eqref{eq:<u,pv>=<u,v>p^+Ap(uv)} that
	\begin{eqnarray*}
		\fx{u-pv}{pv}_\R&=&\re(\fx{u-pv}{v}\overline{p})\\
		&=&\re((\fx{u}{v}-p\fx{v}{v})\ \overline{p})\\
		&=&0.
	\end{eqnarray*}
	This implies that
	\begin{eqnarray}\label{eqpf:cs}
	\fsh{u}^2=\fsh{u-pv}^2+\fsh{pv}^2\geqslant \fsh{pv}^2.
	\end{eqnarray}
	Since  $$\fsh{pv}=\abs{p}\fsh{v}=\abs{p},$$
	 we can rewrite  \eqref{eqpf:cs}   as
	$$\abs{\fx{u}{v}}^2\leqslant\fsh{u}^2= \fsh{u}^2\fsh{v}^2.$$
	Moreover, in view of \eqref{eqpf:cs}, we conclude that $\abs{\fx{u}{v}}=\fsh{u}\fsh{v}$ if and only if
	$$\fsh{u-pv}^2=0,$$ which means  $u,v$ are $\spo$-linearly dependent.
\end{proof}

\section{Riesz representation theorem}\label{sec:O Hilbert}

Let $H$ be an $\O$-Hilbert space. We consider
  $$H^{*}:=\{f\in {\Hom_{L\O}(H,\O)}\mid f \text{ is continuous}\}.$$ For any $f\in H^{*}$,
we define the right scalar multiplication by \begin{eqnarray}\label{eqdef:<x,fr>-129}
	(f\odot r)(x)\coloneqq\ f(x)r-[r,x,f]
	\end{eqnarray}
and define the norm
\begin{eqnarray}\label{eqdef:funct norm}
\fsh{f}=\sup_{\fsh{x}\leqslant1}\fsh{f(x)}.
\end{eqnarray}
This makes  $H^{*}$     a Banach right $\O$-module.
Indeed, Theorem \ref{Thm:left, bimod Hom(M,M')} shows that $H^*$ is a right $\O$ module.
To show it is a normed space, we need to prove
$$\fsh{f\odot p}=\fsh{f}\ \abs{p}$$
for all $f\in H^*$ and $p\in \O$.
This needs an alternative expression of the right multiplication in \eqref{eqdef:<x,fr>-129}  below.

\begin{lemma}
	For any  $f\in H^*$ and $p\in \spo$, we have
	\begin{equation} \label{eq:<x,Tp>=pT(p^-1x)p}
	{(f\odot p)(x)}=pf(p^{-1}x)p.
	\end{equation}
\end{lemma}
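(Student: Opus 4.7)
The plan is to verify the claimed formula by substituting $y := p^{-1}x$ and reducing the equation $(f\odot p)(py) = pf(y)p$ to an identity between second associators that can be checked with the tools already established in Section~3. By definition,
$$(f\odot p)(py) = f(py)p - [p,py,f] = f(py)p + pf(py) - f\!\bigl(p(py)\bigr),$$
and since $H$ is a left $\mathbb{O}$-module we may use left alternativity to replace $p(py)$ by $p^2 y$.

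Expanding $f(py) = pf(y) + [p,y,f]$ and $f(p^2y) = p^2 f(y) + [p^2, y, f]$, and using the fact that $(pf(y))p = p(f(y)p) = pf(y)p$ (an instance of the Moufang-type identity $[a,b,a]=0$), the right-hand side $pf(y)p$ cancels with the term $pf(y)p$ coming from $f(py)p + pf(py)$. After also cancelling $p^2 f(y)$, the identity to prove collapses to
$$[p,y,f]\,p + p[p,y,f] = [p^2, y, f].$$

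It remains to verify this associator identity. For the right-hand side, I exploit the real linearity of $[\,\cdot\,, y, f]$ in its first slot together with the octonionic relation $p^2 = 2(\re p)\,p - |p|^2$; since $[1,y,f] = 0$ trivially, this gives $[p^2, y, f] = 2(\re p)\,[p,y,f]$. For the left-hand side, I invoke identity~\eqref{eq:Omod left bi pAp(xf)=Ap(xf)p}, which says $p[p,y,f] = [p,y,f]\overline{p}$, and therefore
$$[p,y,f]\,p + p[p,y,f] = [p,y,f]\,(p + \overline{p}) = 2(\re p)\,[p,y,f].$$
The two sides agree, which completes the reduction.

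The main obstacle in this argument is the tight interplay between nonassociativity and the Moufang-type combination $pf(p^{-1}x)p$: one must be careful that every multiplication rearrangement is genuinely legal in $\mathbb{O}$ and in the $\mathbb{O}$-module $H$. The key leverages are (i) the alternativity $p(py) = p^2 y$ in $H$, (ii) the triviality of the associator $[a,b,a]$ in $\mathbb{O}$ to make $pf(y)p$ unambiguous, and (iii) the fundamental associator identity~\eqref{eq:Omod left bi pAp(xf)=Ap(xf)p} established earlier. Everything else is real-linear bookkeeping.
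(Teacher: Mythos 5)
Your proof is correct, and it takes a genuinely different route from the one in the paper. The paper normalizes to $|p|=1$ (so that $p^{-1}=\overline{p}$), starts from the right-hand side $pf(\overline{p}x)p$, and collapses it to $f(x)p-[p,x,f]$ using $[\overline{p},x,f]=-[p,x,f]$ together with identity \eqref{eq:Omod left bi pAp(xf)=Ap(xf)p}. You instead start from the left-hand side, substitute $x=py$, and reduce the whole statement to the single associator identity
$$[p,y,f]\,p+p[p,y,f]=[p^{2},y,f],$$
which you verify by combining \eqref{eq:Omod left bi pAp(xf)=Ap(xf)p} with the quadratic relation $p^{2}=2(\re p)\,p-|p|^{2}$ and the real linearity of $[\,\cdot\,,y,f]$ in its first slot. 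Both arguments hinge on the same key fact \eqref{eq:Omod left bi pAp(xf)=Ap(xf)p}, but yours avoids the normalization and the conjugation step at the price of slightly longer bookkeeping, and the intermediate identity $[p,y,f]p+p[p,y,f]=[p^{2},y,f]$ is a worthwhile byproduct that isolates exactly where alternativity enters. Two steps you use implicitly are worth flagging, though neither is a gap: the cancellation $p(pf(y))=p^{2}f(y)$ invokes alternativity of $\spo$ itself (not only the module identity $p(py)=p^{2}y$), and recovering the stated form from $(f\odot p)(py)=pf(y)p$ requires $p(p^{-1}x)=x$, which holds because $p^{-1}$ lies in the real span of $1$ and $p$ and the left associator is alternating.
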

\begin{proof}
\bfs \ $|p|=1$. This means $p^{-1}=\overline{p}$. By the definition of the second associator in \eqref{def:sec-asso-294}, we have
	\begin{align*} f(\overline{p} x)
=\overline{p} f(x)+[\overline{p}, x, f].
	\end{align*}
Multiplying the  left and   right above by $p$, we obtain
$$pf(p^{-1} x)p=\Big(p\big(\overline{p} f(x)+[\overline{p}, x, f]\big)\Big)p.$$
We thus can apply the identity
  $$[\overline{p},x,f]=-[p,x,f] $$
and  identity   \eqref{eq:Omod left bi pAp(xf)=Ap(xf)p}
$$p[p,x,f]=[p,x,f]\overline{p}$$
to conclude
\begin{align} \label{eq:var-moufang-110}pf(p^{-1} x)p
=  f(x)p-[p, x, f].
	\end{align}
By the definition of the right scalar multiplication in \eqref{eqdef:<x,fr>}, the right side above is exactly $(f\odot p)(x)$ as desired.
\end{proof}
\begin{rem}
In our typical Example  \ref{eg:Rp}, we have  $\O$-\almost linear function
 $R_q\in  \Hom_{L\O}(\O,\O)$ and its
the right  multiplication by an octonion $p$ is given by
 $$R_q\odot  p=R_{qp}.$$
Indeed,
\begin{eqnarray*}
R_q\odot  p(x)&=&R_q(x) p-[p, x, R_q]
\\ &=&(xq)p-[p, x, q]
\\ &=& (xq)p-[x, q, p]
\\ &=& x(qp)
\\ &=& R_{qp}(x).
\end{eqnarray*}
In this setting,	
	identity \eqref{eq:<x,Tp>=pT(p^-1x)p}   becomes
\begin{eqnarray*}
	{(R_q\odot p)(x)}=pR_q(p^{-1}x)p.
	\end{eqnarray*}
That is,
\begin{equation}\label{eq:variant-MF-944}
  x(qp)=p((p^{-1}x)q)p
\end{equation}
for any $p, q, x\in\O$.
	This is   a variant of the following Moufang identity \cite{schafer2017introduction}
	\begin{eqnarray}
	(xy)(ax)=x(ya)x.
	\end{eqnarray}
Consequently,
  \eqref{eq:<x,Tp>=pT(p^-1x)p}   is a generalized Moufang identity.
\end{rem}

\begin{thm}\label{thm:B(XY) is Banach}
	$H^*$ is a Banach right $\O$-module.
\end{thm}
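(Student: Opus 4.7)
The proof splits into three tasks: (i) confirm the right $\spo$-module axioms, (ii) verify that $\fsh{\cdot}$ in \eqref{eqdef:funct norm} is a genuine $\spo$-module norm, and (iii) establish completeness. Task (i) is handled by Theorem \ref{Thm:left, bimod Hom(M,M')} applied to $M=H$, so the work lies in (ii) and (iii).

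For task (ii), the positivity axiom is automatic from \eqref{eqdef:funct norm} together with Corollary \ref{cor: A_p(x,f)=0 and f(px)=pf(x)}(2), which prevents a nonzero $f\in H^*$ from vanishing on a dense set, and the triangle inequality follows at once from real linearity of $f$ and the triangle inequality on $\O$. The only nontrivial identity is the norm compatibility
\[
\fsh{f\odot p}=\fsh{f}\,\abs{p}.
\]
Here I plan to exploit Lemma \eqref{eq:<x,Tp>=pT(p^-1x)p}, the generalized Moufang expression $(f\odot p)(x)=pf(p^{-1}x)p$. The upper bound is then a one-line computation using multiplicativity of the octonionic norm:
\[
\fsh{(f\odot p)(x)}=\abs{p}\,\abs{f(p^{-1}x)}\,\abs{p}\leqslant\abs{p}^{2}\fsh{f}\,\abs{p^{-1}x}=\abs{p}\,\fsh{f}\,\fsh{x}.
\]
For the reverse inequality I will check that $(f\odot p)\odot p^{-1}=f$, which by applying the already-proved upper bound to $f\odot p$ and $p^{-1}$ gives $\fsh{f}\leqslant\abs{p}^{-1}\fsh{f\odot p}$. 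To verify $(f\odot p)\odot p^{-1}=f$ I again use \eqref{eq:<x,Tp>=pT(p^-1x)p}: after alternativity reduces $p^{-1}(px)=x$, the claim collapses to the elementary alternative-algebra identity $p^{-1}(pap)p^{-1}=a$ for $a=f(x)$, itself an instance of the middle Moufang law $(ab)(ca)=a(bc)a$ with $a=p^{-1},\,b=pa,\,c=p$ (using flexibility to write $pap$ unambiguously). This is the step I expect to be the main obstacle, because without the Moufang-type formula there is no clean way to cancel second associators appearing on both sides of the identity.

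For task (iii), completeness is a routine transcription of the real-Banach argument, but I want to check that no octonionic pathology creeps in. Given a Cauchy sequence $\{f_n\}\subset H^*$, pointwise Cauchyness $\abs{f_n(x)-f_m(x)}\leqslant\fsh{f_n-f_m}\fsh{x}$ together with completeness of $\spo$ yields a pointwise limit $f(x)=\lim_n f_n(x)$. Real linearity of $f$ is immediate. For $\spo$-para-linearity, I will pass to the limit in the identity $\re[p,x,f_n]=0$ using continuity of the real part and of left multiplication by $p\in\spo$. Boundedness of $f$ follows from boundedness of any Cauchy sequence in norm, and the standard $\varepsilon/3$-argument shows $\fsh{f_n-f}\to 0$. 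Thus $H^*$ is complete and the theorem follows.
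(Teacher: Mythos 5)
Your proposal is correct and follows essentially the same route as the paper: the module structure is imported from Theorem \ref{Thm:left, bimod Hom(M,M')}, the norm identity $\fsh{f\odot p}=\fsh{f}\abs{p}$ is obtained from the generalized Moufang expression \eqref{eq:<x,Tp>=pT(p^-1x)p} together with the substitution $p\mapsto p^{-1}$, $f\mapsto f\odot p$, and completeness is the standard limit argument with $\re[p,x,\cdot]$ passed to the limit. Your only addition is an explicit verification of $(f\odot p)\odot p^{-1}=f$ (which the paper uses tacitly via right alternativity of the module action), and that verification is sound.
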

\begin{proof}
For any $f\in H^*$ and $p\in \O$, it follows from identity \eqref{eq:<x,Tp>=pT(p^-1x)p} that
	\begin{eqnarray*}
	\fsh{f\odot p}&=&\sup_{\fsh{x}
\leqslant1}\fsh{pf(p^{-1}x)p}
\\ &\leqslant&
\sup_{\fsh{x}\leqslant1}\abs{p} \  \fsh{f}\  \| p^{-1}x\| \ \abs{p}
\\ &\leqslant& |p|\fsh{f}.
	\end{eqnarray*}
Thus $f\odot p$ is a continuous $\O$-\almost linear functional, i.e., $f\odot p\in H^*$.  In view of Theorem \ref{Thm:left, bimod Hom(M,M')}, we conclude that $H^*$ is an $\O$-submodule of the right $\O$-module $\Hom(H,\O)$.

	Since $p$ and $f$ are arbitrarily fixed, we can replace $p$ by $p^{-1}$ and   $f$ by $f\odot p$ to get
\begin{eqnarray*} \fsh{(f\odot p)\odot p^{-1}}
 \leqslant  |p^{-1}|\fsh{f\odot p}
 \end{eqnarray*}
so that
	\begin{eqnarray*}\fsh{f}&=&\fsh{(f\odot p)\odot p^{-1}}
\\ &\leqslant& |p^{-1}|\fsh{f\odot p}
\\ &\leqslant& |p^{-1}|\ |p| \  \fsh{f}
\\ &=& \fsh{f}.\end{eqnarray*}
This implies  $$\fsh{f\odot p}=|p|\fsh{f}.$$
%\label{eqdef:funct norm}
Hence $H^*$ is a normed right $\O$-module with respect to  norm \eqref{eqdef:funct norm}.
	
	Finally, we show that $H^*$ is a Banach right $\O$-module.  	
Let	$H^{*_\R}$ stand for the real dual space of  $H$ regarded as a real Hilbert space, i.e.,
$$H^{*_\R}=\{f: H\xlongrightarrow[]{} \mathbb R \ | \ \mbox{$f$ is a  bounded real linear functional} \}.$$
	It is well-known that $H^{*_\R}$ is a real Banach space. Therefore, if $f_n\in H^{*}$ is  a Cauchy sequence, then there  exists $ f\in H^{*_\R}$ such that $\lim _{n\to\infty}f_n=f$. It is easy to check that the maps $$[p,x,\cdot]:H^*\to \O$$ and  $$\re:\O\to \O$$ are both continuous. Hence $$\re [p,x,f]=\re([p,x,\lim _{n\to\infty}f_n])=\re(\lim _{n\to\infty}[p,x,f_n])=\lim _{n\to\infty} \re([p,x,f_n])=0.$$
	This proves $f\in H^{*}$.
\end{proof}

 The following lemma shows that  $H^*$ as a real Hilbert space is isometric with $H^{*_\R}$  via the   real part operator.

\begin{lemma}\label{lem:fsh f=fsh f0} The real part map
\begin{eqnarray*}\re: H^{*}&\xlongrightarrow[]{}& H^{*_\R},
\end{eqnarray*}
defined by $(Re f)(x):=f_{\mathbb R}(x)$,
is a {norm-preserving} map. That is,
	for any $f\in H^{*}$  we have $$\fsh{f}_{H^{*}}=\fsh{f_{\R}}_{H^{*_\R}}.$$
	 \end{lemma}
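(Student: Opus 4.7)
The plan is to prove the two inequalities $\fsh{f_{\mathbb R}}_{H^{*_\R}} \leq \fsh{f}_{H^*}$ and $\fsh{f}_{H^*} \leq \fsh{f_{\mathbb R}}_{H^{*_\R}}$ separately. The first direction is trivial, since for any $x$ with $\fsh{x}\leq 1$ we have $\abs{f_{\mathbb R}(x)}=\abs{\re f(x)} \leq \abs{f(x)} \leq \fsh{f}_{H^*}$, and taking the supremum yields $\fsh{f_{\mathbb R}}_{H^{*_\R}}\leq \fsh{f}_{H^*}$.

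The reverse inequality is the interesting one, and the main tool will be a \emph{rotation trick} enabled by $\O$-para-linearity. Fix $x\in H$ with $\fsh{x}\leq 1$, and assume $f(x)\neq 0$ (otherwise there is nothing to show). Set $\alpha:=f(x)/\abs{f(x)}$, so that $\abs{\alpha}=1$ and, because multiplication by $\alpha$ and $\overline{\alpha}$ is associative on the one-dimensional real subspace they span in $\O$, one has $\overline{\alpha} f(x)=\abs{f(x)}\in \R$. In particular, $\re(\overline{\alpha} f(x))=\abs{f(x)}$.

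The decisive step is to transfer this real number onto $f_{\mathbb R}$ via the para-linearity axiom. By the very definition of the second associator,
\begin{equation*}
f(\overline{\alpha}\,x)=\overline{\alpha}\,f(x)+[\overline{\alpha},x,f].
\end{equation*}
Taking the real part and using $\re[\overline{\alpha},x,f]=0$ (which is exactly $\O$-para-linearity of $f$), we obtain
\begin{equation*}
f_{\mathbb R}(\overline{\alpha}\,x)=\re f(\overline{\alpha}\,x)=\re(\overline{\alpha} f(x))=\abs{f(x)}.
\end{equation*}
Since $\fsh{\overline{\alpha}\,x}=\abs{\overline{\alpha}}\fsh{x}=\fsh{x}\leq 1$, it follows that $\abs{f(x)}=f_{\mathbb R}(\overline{\alpha}\,x)\leq \fsh{f_{\mathbb R}}_{H^{*_\R}}$. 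Taking the supremum over $\fsh{x}\leq 1$ gives $\fsh{f}_{H^*}\leq \fsh{f_{\mathbb R}}_{H^{*_\R}}$, completing the proof.

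The only delicate point is the legitimacy of writing $\overline{\alpha} f(x)=\abs{f(x)}$ in the nonassociative setting; this is fine because $\alpha$ lies in the real two-dimensional subalgebra $\R[\alpha]\subset \O$, which is associative and on which conjugation and norm behave classically. Everything else is either the definition of the norms involved or the para-linearity condition $\re[p,x,f]=0$, applied with $p=\overline{\alpha}$. In particular, no appeal to Theorem \ref{thm: f in Hom(M,M')  equivalent:} or to the module structure of $H^*$ is needed.
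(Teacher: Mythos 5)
Your proof is correct and is essentially the paper's own argument: the paper also proves the nontrivial inequality by choosing a unit octonion $p$ with $\abs{f(x)}=pf(x)$ (your $p=\overline{\alpha}$ is exactly this choice, made explicit) and then using para-linearity to write $\re(pf(x))=\re f(px)=f_{\R}(px)$ with $\fsh{px}\leqslant 1$. The only difference is presentational, so nothing further is needed.
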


\begin{proof}
	Let $f\in H^{*}$ and   $x\in H$ with $\fsh{x}\leqslant1$. Then there exists $p\in \spo$ with $|p|=1$ such that $$|f(x)|=pf(x)=\re(pf(x)).
	$$
	Since $f$ is \almost linear, it follows that $$|f(x)|=\re(pf(x))=\re(f(px))=f_{\R}(px)\leqslant\fsh{f_{\R}}_{H^{*_\R}}\fsh{px}\leqslant\fsh{f_{\R}}_{H^{*_\R}}.$$
	Thus we obtain $\fsh{f}_{H^{*}}\leqslant\fsh{f_{\R}}_{H^{*_\R}}$.

On the other hand, it is easy to see that  $\fsh{f_{\R}}_{H^{*_\R}}\leqslant\fsh{f}_{H^{*}}$. This completes the proof.
\end{proof}

\bigskip
To state the  octonionic version of Riesz representation theorem, we need introduce the notion of conjugate $\O$-linearity. %plays an eminent  role in .

\begin{mydef}\label{def:conjugate homo}
	Let $M_1$  be a left $\O$-module and  $M_2$ a right $\O$-module.
	A map $f\in \Hom_\R(M_1,M_2)$ is called   \textbf{conjugate $\O$-linear}  if
	$$f(px)=f(x)\overline{p}$$
	for all $p\in \O$ and all $x\in M_1$.

\end{mydef}

\begin{thm}[\textbf{Riesz representation theorem}]\label{thm:Riesz representation theorem}
	Let $H$ be an   $\O$-Hilbert space.  Then there is an isometric   conjugate  $\spo$-isomorphism
	\begin{eqnarray}\label{eq:riesz iso}
	H^{*}\cong {H}.
	\end{eqnarray}
\end{thm}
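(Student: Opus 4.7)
The plan is to show that the natural map
$$\Phi : H \longrightarrow H^*, \qquad \Phi(v)(x) := \fx{x}{v},$$
is the desired isometric conjugate $\spo$-isomorphism. The argument rests on two pillars already in place: the classical Riesz representation theorem applied to the real Hilbert space $(H, \fx{\cdot}{\cdot}_\R)$, and the rigidity principle from part (ii) of Corollary \ref{cor: A_p(x,f)=0 and f(px)=pf(x)}, which asserts that an $\spo$-\almost linear functional is uniquely determined by its real part. Together, these will reduce the octonionic statement to its classical real counterpart.

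First I would verify that $\Phi$ maps into $H^*$: axiom (1) of Definition \ref{def:hilbert O space} yields $\spo$-\almost linearity, and the Cauchy-Schwarz inequality \eqref{eq:Cauchy-Schwarz inequality} gives both continuity and the estimate $\fsh{\Phi(v)} \leqslant \fsh{v}$. Injectivity is immediate from positivity. For surjectivity, given $f \in H^*$, apply the classical Riesz theorem to the bounded real-linear functional $f_\R$ on the real Hilbert space underlying $H$ to produce $v \in H$ with $f_\R(x) = \fx{x}{v}_\R$ for all $x$. Both $f$ and $\Phi(v)$ are $\spo$-\almost linear with identical real parts, so their difference is $\spo$-\almost linear with vanishing real part; part (ii) of Corollary \ref{cor: A_p(x,f)=0 and f(px)=pf(x)} then forces $f = \Phi(v)$. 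Combining Lemma \ref{lem:fsh f=fsh f0} with the classical Riesz isometry yields
$$\fsh{\Phi(v)}_{H^*} = \fsh{(\Phi(v))_\R}_{H^{*_\R}} = \fsh{v},$$
settling the isometric property.

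For the conjugate $\spo$-linearity $\Phi(pv) = \Phi(v) \odot \bar{p}$, I would unwind the right-action formula \eqref{eqdef:<x,fr>} and identify the second associator of $\Phi(v)$ in $\Hom_{L\O}(H,\O)$ with the one in $H$ via Remark \ref{rem:ass coincide}. The claim then becomes
$$\fx{x}{pv} = \fx{x}{v}\bar{p} - [\bar{p}, x, v].$$
By identity \eqref{eq:<u,pv>=<u,v>p^+Ap(uv)} the left side equals $\fx{x}{v}\bar{p} + [p, x, v]$, so the desired equality reduces to the alternativity $[\bar{p}, x, v] = -[p, x, v]$, which follows at once from the real linearity of $\fx{\cdot}{v}$ in the first slot together with $p + \bar{p} \in \spr$.

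I do not anticipate a serious technical obstacle, since the heavy lifting has already been carried out in Sections \ref{sec:almost linearity} and \ref{sec:O Hilbert}. The main conceptual care is to track the three distinct manifestations of the second associator (in $H$, in $\Hom_{L\O}(H,\O)$, and within $\spo$ itself) and to keep them aligned through Remark \ref{rem:ass coincide}; beyond that, the octonionic Riesz theorem is a formal consequence of the real one, made possible by the rigidity of \almost linear functionals under the real-part operator.
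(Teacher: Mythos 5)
Your proposal is correct and follows essentially the same route as the paper: the classical Riesz theorem applied to $f_\R$, the rigidity of \almost linear functionals from part (ii) of Corollary \ref{cor: A_p(x,f)=0 and f(px)=pf(x)}, Lemma \ref{lem:fsh f=fsh f0} for the isometry, and the identity \eqref{eq:<u,pv>=<u,v>p^+Ap(uv)} together with $[\overline{p},x,v]=-[p,x,v]$ for the conjugate $\spo$-linearity. The only cosmetic difference is that you work directly with $\Phi=\tau:H\to H^*$ and prove bijectivity, whereas the paper first constructs the inverse map $\sigma:f\mapsto z_f$ and then exhibits $\tau$ as its two-sided inverse.
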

\begin{proof}
	Let $f\in H^{*}$ be \almost linear.  By  Theorem \ref{thm: f in Hom(M,M')  equivalent:}, it can be written as  $$f(x)=f_{\R}(x)-\sum_{i=1}^7 e_if_{\R}(e_ix),$$ where $f: H \to \R $ is a real linear  functional. Then  the classical Riesz representation theorem shows  there exists a unique element $z_f\in H$ such
	that $$f_{\R}(x)=\fx{x}{z_f}_\spr$$ and $$ \fsh{z_f}=\fsh{f_{\R}}.$$
Since both $f$ and $\langle \cdot, z_f\rangle$ are $\O$-para-linear and share the same real part, they must coincide as shown by
Corollary \ref{cor: A_p(x,f)=0 and f(px)=pf(x)},
  i.e.,
	$$f(x)= \fx{x}{z_f}.$$
	By Lemma   \ref{lem:fsh f=fsh f0} we have
	$$\fsh{z_f}=\fsh{f}.$$
	Recall that $H^{*}$ is a Banach right $\O$-module by Theorem \ref{thm:B(XY) is Banach}. Hence,  the map \begin{eqnarray*}
		\sigma:H^{*}&\rightarrow& H\\
		f&\mapsto& z_f
	\end{eqnarray*}
is an
	isometry between $H^{*}$ and $H$ as real Banach spaces.
	
	We next prove that $\sigma$ is a conjugate  $\spo$-homomorphism.
	For  any $x\in H$ and any $r\in \spo$,  by definition \eqref{eqdef:<x,fr>} we have
	\begin{eqnarray*}
	\fx{x}{z_{f\odot r}}&=&{(f\odot r)(x)}
\\
	&=&f(x)r-[r,x,f]
 \\
		&=&\fx{x}{z_f}\overline{\overline{r}}+[\overline{r},x,z_f]
\\
	&=&\fx{x}{\overline{r}z_f}.
	\end{eqnarray*}
	This shows that $$ \sigma{(f\odot r)}=\overline{r}\sigma(f).$$ This implies that  $\sigma$ is conjugate  $\spo$-linear.
	
	We finally show that $\sigma$ is a conjugate $\spo$-isomorphism.
	We define \begin{eqnarray*}
		\tau:H&\to &H^{*}\\
		y&\mapsto& y':=\fx{\cdot}{y}.
	\end{eqnarray*}
	We now prove $\tau$ is also a conjugate  $\spo$-homomorphism.
	For any $x\in H$ and any $r\in \O$, it follows from  \eqref{eqdef:<x,fr>}  and \eqref{eq:ass coincide}  that
	\begin{align*}
{(y'\odot r)(x)}&={y'(x)}r-[r,x,y']\\
	&=\fx{x}{y}r-[r,x,y]
	\\
	 &=\fx{x}{y}\overline{\overline{r}}+[\overline{r},x,y]\\
	&=\fx{x}{\overline{r}y}\\
	&={(\overline{r}y)'(x)}.
	\end{align*}
	This shows that $$\tau(y)\odot r=\tau(\overline{r}y),$$ i.e., $\tau$ is also a conjugate  $\spo$-homomorphism.

Moreover, it is easy to check that
	$$\sigma \tau=id,\quad \tau\sigma=id.$$
	Hence $\sigma$ is a  conjugate  $\spo$-isomorphism.
	This finishes the proof.
\end{proof}
{Associated to   a right $\O$-module $M$ is an induced  left $\O$-module
$M^{-}$ with multiplication   defined by
$$p\cdot x:=x\overline{p}$$
 for all $x\in M$ and all $p\in \O$.
We can thus identify two  left modules  $H$ and  $({H^*})^-$ by the isomorphism $\sigma$ in Theorem \ref{thm:Riesz representation theorem}. Moreover,  this isomorphism induces on $({H^*})^-$   a  canonical $\O$-inner product
$$\fx{f}{g}:={\fx{z_f}{z_g}}$$ for any $f,g\in (H^*)^{-}$.
By definition, the octonion  hermiticity and positivity can be verified directly.
  It remains  to show the \almost linearity.
By calculation,  for any $r\in \O$ and $f,g\in H^*$  we have
\begin{eqnarray*}
\fx{r\cdot f}{g}&=&\fx{f\odot \overline{r}}{g}\\
&=&\fx{z_{f\odot \overline{r}}}{z_g}\\
&=&\fx{rz_{f}}{z_g}\\
&=&r\fx{z_{f}}{z_g}+[r,z_{f},z_{g}].
\end{eqnarray*}
This shows that $\fx{\cdot}{g}$ is \almost linear as desired. Then $\sigma$ is an isomorphism between two  Hilbert left $\O$-modules.
}

\bigskip

Based on Theorem \ref{thm:Riesz representation theorem}, we can      characterize  the set
$$H^{*_\O}=\{f: H\xlongrightarrow[]{} \mathbb O \ | \ \mbox{$f$ is a  bounded $\O$-linear functional} \}$$
  in terms of  associative elements of $H$.
To do this, we   need  a criterion for an element $x\in H$ to be an associative element.

\begin{lemma}\label{lem:huaa(H)=huaaer(H)}
	Let $ H $ be a Hilbert left $\O$-module and $x\in H$. Then $x\in \huaa{H}$  if and only if
\begin{equation} \label{eq:assup-ass-392}[p,x,y]=0
\end{equation}
 for all $p\in\spo$ and $y\in H.$
\end{lemma}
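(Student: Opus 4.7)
The statement is a biconditional. The forward direction follows almost immediately from the machinery of Section~\ref{sec:almost linearity}, while the reverse direction needs one short computation combining positivity of the $\O$-inner product with the fact that the real part of any octonion associator vanishes.

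For $(\Rightarrow)$, I will fix $x\in\huaa{H}$ and an arbitrary $y\in H$. Since the functional $f:=\langle\cdot,y\rangle$ belongs to $\Hom_{L\O}(H,\O)$ by Definition~\ref{def:hilbert O space}, Corollary~\ref{cor: A_p(x,f)=0 and f(px)=pf(x)}(1) yields $[p,x,f]=0$ for every $p\in\O$. Under the identification of the two notions of second associator given in Remark~\ref{rem:ass coincide}, this is exactly the desired identity $[p,x,y]=0$.

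For $(\Leftarrow)$, suppose that \eqref{eq:assup-ass-392} holds, i.e., $\langle px,y\rangle=p\langle x,y\rangle$ for all $p\in\O$ and $y\in H$. Applying this hypothesis once with scalar $pq$ and then iteratively with $p$ and $q$, I will obtain
$$\langle(pq)x-p(qx),y\rangle=(pq)\langle x,y\rangle-p(q\langle x,y\rangle),$$
whose right-hand side is precisely the octonionic associator of $p$, $q$, and $\langle x,y\rangle$. The trick is to specialize $y:=(pq)x-p(qx)$: the left-hand side then becomes $\fsh{(pq)x-p(qx)}^2$, while the real part of the right-hand side is zero because $\re[a,b,c]=0$ for all $a,b,c\in\O$. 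Positivity of the inner product then forces $(pq)x=p(qx)$, and since $p,q$ were arbitrary this gives $x\in\huaa{H}$.

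The only mildly delicate point is the choice of test vector $y=(pq)x-p(qx)$, which is what converts the first-slot hypothesis into a module-level identity via the vanishing of $\re[\cdot,\cdot,\cdot]$ on $\O$; beyond this there is no serious obstacle.
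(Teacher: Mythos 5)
Your forward direction coincides with the paper's: identify $y$ with the para-linear functional $y'=\langle\cdot,y\rangle$ and apply Corollary \ref{cor: A_p(x,f)=0 and f(px)=pf(x)} together with Remark \ref{rem:ass coincide}. That part is fine.

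In the reverse direction there is a genuine gap in your displayed identity. Hypothesis \eqref{eq:assup-ass-392} gives $\langle pu,y\rangle=p\langle u,y\rangle$ only for $u=x$; it says nothing about the element $qx$. Your ``iterative'' step $\langle p(qx),y\rangle=p\langle qx,y\rangle$ therefore requires $[p,qx,y]=0$, which is not part of the hypothesis. The correct computation reads
$$\langle (pq)x-p(qx),\,y\rangle=[p,q,\langle x,y\rangle]-[p,qx,y],$$
with an extra second-associator term that is in general nonzero. The argument is nevertheless repairable, because you only ever use the real part of this expression: $\re[p,qx,y]=0$ holds for \emph{every} element by the para-linearity axiom in Definition \ref{def:hilbert O space}, so $\re\langle (pq)x-p(qx),y\rangle=0$ still follows, and your specialization $y=(pq)x-p(qx)$ plus positivity then kills the associator. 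You must, however, state the corrected identity and cite the axiom $\re[p,u,v]=0$ for arbitrary $u$ (not the hypothesis) to justify discarding the extra term. After this repair your route is essentially the paper's, which obtains $\re\fx{[q,p,x]}{y}=-\re\left(p[q,x,y]\right)=0$ directly from identity \eqref{eq:Re<[p,q,x],y>=Re(pA(x,y))} and concludes $[q,p,x]=0$ by nondegeneracy of $\fx{\cdot}{\cdot}_\R$; your choice of test vector is just the explicit instance of that nondegeneracy argument.
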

\begin{proof}
	If  $x\in \huaa{H}$, then Corollary \ref{cor: A_p(x,f)=0 and f(px)=pf(x)} shows  that $$[p,x,y]=[p,x,y']=0,$$ where $y'$ is defined as in the proof of Theorem \ref{thm:Riesz representation theorem}.

 Conversely, if $x\in H$ satisfies   hypothesis \eqref{eq:assup-ass-392},  then identity \eqref{eq:Re<[p,q,x],y>=Re(pA(x,y))} implies that $$\fx{[q,p,x]}{y}_{\spr}=0$$ for any $p,q\in \spo$ and $y\in H$. Since $\left(H,\fx{\cdot}{\cdot}_{\spr}\right)$ is a  real Hilbert space, we  thus conclude that $$[q,p,x]=0$$ for all $p,q\in \spo$,	which means that $x\in \huaa{H}$. This completes the proof.
\end{proof}
 \begin{thm}\label{thm:riesz real}
	Let $H$ be an $\spo$-Hilbert space. Then there is an isometric isomorphism of real Banach spaces
	\begin{eqnarray}\label{eq:riesz iso 2}
	H^{*_\spo}\cong \huaa{H}.
	\end{eqnarray}
\end{thm}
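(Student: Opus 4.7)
The plan is to derive this theorem as a corollary of the Riesz representation theorem (Theorem \ref{thm:Riesz representation theorem}) by showing that the isometric conjugate $\O$-isomorphism $\sigma \colon H^{*} \to H$, $f \mapsto z_f$, restricts to a bijection between $H^{*_\spo}$ and $\huaa{H}$.

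First I would observe that for any $f \in H^{*}$ with $f(x) = \langle x, z_f\rangle$, identity \eqref{eq:ass coincide} from Remark \ref{rem:ass coincide} gives
\begin{equation*}
[p, x, f] = f(px) - p f(x) = \langle px, z_f\rangle - p\langle x, z_f\rangle = [p, x, z_f]
\end{equation*}
for all $p \in \O$ and all $x \in H$. Therefore $f$ is $\O$-linear, i.e.\ $f \in H^{*_\spo}$, if and only if $[p, x, z_f] = 0$ for every $p \in \O$ and every $x \in H$. By Lemma \ref{lem:huaa(H)=huaaer(H)}, this is precisely the condition that $z_f \in \huaa{H}$. Hence $\sigma$ restricts to a bijection
\begin{equation*}
\sigma|_{H^{*_\spo}} \colon H^{*_\spo} \longrightarrow \huaa{H}.
\end{equation*}

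Next I would check that both sides are real Banach spaces and that the restriction of $\sigma$ is a real-linear isometry. The space $H^{*_\spo}$ is a closed real subspace of $H^{*}$, since pointwise limits of $\O$-linear functionals remain $\O$-linear. Likewise $\huaa{H}$ is a closed real subspace of $H$, because for fixed $p, q \in \O$ the map $x \mapsto [p, q, x]$ is real linear and continuous (continuity of left multiplication in an $\O$-Hilbert space follows from $\|px\| = |p|\,\|x\|$). The map $\sigma$ is already known from Theorem \ref{thm:Riesz representation theorem} to be a real-linear isometry, so its restriction inherits both properties and yields the isometric isomorphism \eqref{eq:riesz iso 2} of real Banach spaces.

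There is no real obstacle here; the only subtlety worth flagging is the correct use of Remark \ref{rem:ass coincide} to translate the second associator on $H^{*}$ into the second associator on $H$, which is what allows Lemma \ref{lem:huaa(H)=huaaer(H)} to apply. Note also that \eqref{eq:riesz iso 2} is stated only as an isomorphism of real Banach spaces, not of $\O$-modules: this is natural, because $\huaa{H}$ is only a real subspace of $H$ (multiplication by $\O$ need not preserve the nucleus in general), mirroring the fact that $H^{*_\spo}$ is not in general a right $\O$-submodule of $H^{*}$ under the twisted multiplication $\odot$.
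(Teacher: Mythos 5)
Your proposal is correct and follows essentially the same route as the paper: both restrict the Riesz isomorphism, use identity \eqref{eq:ass coincide} to translate the second associator $[p,x,f]$ into $[p,x,z_f]$, and invoke Lemma \ref{lem:huaa(H)=huaaer(H)} (together with the antisymmetry $[p,u,v]=-[p,v,u]$) to identify the $\O$-linear functionals with the associative elements. The only difference is cosmetic: the paper phrases it as surjectivity of $\tau|_{\huaa{H}}$ while you phrase it as a bijection under $\sigma$, and your extra remarks on closedness are harmless but not needed once the isometric bijection is in hand.
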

\begin{proof}
According to Theorem \ref{thm:Riesz representation theorem}, it suffices to show that
$$\tau|_{\huaa{H}}:\huaa{H}\to H^{*_\O}$$ is  surjective.

If $f\in  H^{*_\O}$,  then Theorem \ref{thm:Riesz representation theorem} ensures the existence of   an element $z_f\in H$ such that
$$f(x)=\fx{x}{z_f}.$$
Obviously, $$\tau(z_f)=f.$$ It remains to show that $z_f\in \huaa{H}$.
Since $f$ is $\O$-linear, it follows from identity \eqref{eq:ass coincide} that for any $p\in \O$,
$$[p,z_f,x]=-[p,x,z_f]=-[p,x,f]=-(f(px)-pf(x))=0.$$
Then we conclude from Lemma \ref{lem:huaa(H)=huaaer(H)} that $z_f\in \huaa{H}$.
This completes the proof.
\end{proof}

\begin{rem}
 Isomorphisms \eqref{eq:riesz iso} and \eqref{eq:riesz iso 2} indicate that the objects   in octonionic functional analysis are   $\O$-\almost linear maps rather than $\O$-linear maps.
\end{rem}

	\bibliographystyle{plain}
%\bibliography{hilbanach}

\bigskip\bigskip

\end{document}